\documentclass[12pt,a4paper,reqno]{amsart}
\usepackage{amsthm}
\usepackage{amsfonts}
\usepackage{amsmath}
\usepackage{amssymb}
\usepackage{mathrsfs}
\usepackage{stmaryrd}
\usepackage{bm}
\usepackage[top=1.6in, bottom=1.3in, left=1.3in, right=1.3in]{geometry}
\usepackage[colorlinks,linkcolor=blue,citecolor=blue,urlcolor=blue,hypertexnames=true]{hyperref}
\usepackage{enumitem}
\setlist[enumerate]{label=(\roman*),labelindent=1em,itemsep=0.5em,topsep=0.5em}

\title[G.C.D.\ of shifted primes and Fibonacci numbers]
{Greatest common divisors of shifted primes and Fibonacci numbers}

\author[A.~Jha]{Abhishek Jha}
\address{\parbox{\linewidth}{
Indraprastha Institute of Information Technology,\\
Okhla Industrial Estate, Phase-3, New Delhi, India}}
\email{abhishek20553@iiitd.ac.in}

\author[C.~Sanna]{Carlo Sanna}
\address{\parbox{\linewidth}{
Department of Mathematical Sciences, Politecnico di Torino\\
Corso Duca degli Abruzzi 24, 10129 Torino, Italy\\[-8pt]}}
\email{carlo.sanna.dev@gmail.com}

\subjclass[2010]{Primary: 11N32, Secondary: 11A41}

\keywords{Fibonacci numbers; greatest common divisor; least common multiple; primes}

\theoremstyle{plain}
\newtheorem{theorem}{Theorem}[section]
\newtheorem{lemma}[theorem]{Lemma}
\newtheorem{corollary}[theorem]{Corollary}
\newtheorem{proposition}[theorem]{Proposition}
\newtheorem{remark}[theorem]{Remark}

\DeclareMathOperator*{\lcm}{lcm}

\uchyph=0

\begin{document}

\maketitle

\begin{abstract}
Let $(F_n)$ be the sequence of Fibonacci numbers and, for each positive integer $k$, let $\mathcal{P}_k$ be the set of primes $p$ such that $\gcd(p - 1, F_{p - 1}) = k$. 
We prove that the relative density $\bm{r}(\mathcal{P}_k)$ of $\mathcal{P}_k$ exists, and we give a formula for $\bm{r}(\mathcal{P}_k)$ in terms of an absolutely convergent series.
Furthermore, we give an effective criterion to establish if a given $k$ satisfies $\bm{r}(\mathcal{P}_k) > 0$, and we provide upper and lower bounds for the counting function of the set of such $k$'s.

As an application of our results, we give a new proof of a lower bound for the counting function of the set of integers of the form $\gcd(n, F_n)$, for some positive integer $n$.
Our proof is more elementary than the previous one given by Leonetti and Sanna, which relies on a result of Cubre and Rouse.

\end{abstract}

\section{Introduction}

Let $(u_n)$ be a non-degenerate linear recurrence with integral values.
Several authors studied the arithmetic relations between $u_n$ and $n$.
For instance, under the mild hypothesis that the characteristic polynomial of $(u_n)$ has only simple roots, Alba~Gonz\'{a}lez, Luca, Pomerance, and Shparlinski~\cite{MR2928495} studied the set of positive integers $n$ such that $u_n$ is divisible by $n$.
The same set was also studied by Andr\'e-Jeannin~\cite{MR1131414}, Luca and Tron~\cite{MR3409327}, Sanna~\cite{MR3606950}, and Somer~\cite{MR1271392}, in the special case in which $(u_n)$ is a Lucas sequence.
Furthermore, Sanna~\cite{MR3935356} studied the set of natural numbers $n$ such that $\gcd(n, u_n) = 1$ (see~\cite{MR3896876} for a generalization, and~\cite{MR4159096} for a survey on g.c.d.'s of linear recurrences).
Similar problems, with $(u_n)$ replaced by an elliptic divisibility sequence or by the orbit of a polynomial map, were also studied~\cite{MR3690240,MR4067105,MR2928585,JhaPreprint,MR4017936,MR2747036}.

Let $(F_n)$ be the linear recurrence of Fibonacci numbers, which is defined as usual by $F_1 = F_2 = 1$ and $F_{n+2} = F_{n+1} + F_n$ for all positive integers $n$. 
For every positive integer $k$, define the following set of natural numbers
\begin{equation*}
    \mathcal{A}_k := \{n \geq 1 : \gcd(n, F_n) = k\} ,
\end{equation*}
Recall that the natural density $\bm{d}(\mathcal{S})$ of a set of positive integers $\mathcal{S}$ is defined as the limit of the ratio $\#\big(\mathcal{S} \cap [1, x]\big) / x$ as $x \to +\infty$, whenever this limit exists.
Sanna and Tron~\cite{MR3800663} proved that each $\mathcal{A}_k$ has a natural density, which can be written as an infinite series, and they provided an effective criterion to determine if such density is positive.

We consider similar results but for the set of \emph{shifted primes} $p - 1$.
(Throughout, we reserve the letter $p$ for prime numbers.)
Shifted primes already make their appearance in relation to Fibonacci numbers.
For instance, it is well known that $p$ divides $F_{p - 1}$ for every prime number $p \equiv \pm 1 \pmod 5$.
For each integer $k \geq 1$, define the following set of prime numbers
\begin{equation*}
    \mathcal{P}_k := \{p : \gcd(p-1, F_{p-1}) = k\} .
\end{equation*}
Recall that the \emph{relative density} $\mathbf{r}(\mathcal{P})$ of a set of prime numbers $\mathcal{P}$ is defined as the limit of the ratio $\#(\mathcal{P} \cap [1, x]) / \pi(x)$ as $x \to +\infty$, whenever this limit exists, where $\pi(x)$ denotes the number of primes not exceeding $x$.
Let $z(m)$ denote the \emph{rank of appearance}, or \emph{entry point}, of a positive integer $m$ in the sequence of Fibonacci numbers, that is, the smallest positive integer $n$ such that $m$ divides $F_{n}$. 
It is well known that $z(m)$ exists. 
Also, let $\ell(m)=\lcm\!\big(m,z(m)\big)$.

Our first result establishes the existence of the relative density of $\mathcal{P}_k$ and provides a criterion to check if such a density is positive.

\begin{theorem}\label{thm:existence}
    For each positive integer $k$, the relative density of $\mathcal{P}_k$ exists. 
    Moreover, if $\gcd\!\big(\ell(k),F_{\ell(k)}\big) \neq k$, or if $2 \nmid \ell(k)$ and $\ell(pk) = 2\,\ell(k)$ for some prime number $p$ with $p \nmid k$, then $\mathcal{P}_k \subseteq \{2\}$.
    Otherwise, we have that $\bm{r}(\mathcal{P}_k) > 0$.
\end{theorem}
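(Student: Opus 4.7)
The plan is to convert the gcd condition into congruence conditions, apply Möbius inversion, and then invoke Dirichlet's theorem on primes in arithmetic progressions. The key initial observation is that for any positive integer $m$ one has $m \mid \gcd(p-1, F_{p-1})$ if and only if $\ell(m) \mid p-1$, because $m \mid F_n$ is equivalent to $z(m) \mid n$. Möbius inversion on the divisibility order then gives
\[
\mathbf{1}\!\left[\gcd(p-1, F_{p-1}) = k\right] \;=\; \sum_{d \geq 1} \mu(d)\,\mathbf{1}\!\left[\ell(dk) \mid p-1\right],
\]
and summing over primes $p \leq x$ followed by a formal application of Dirichlet's theorem suggests the identity $\bm{r}(\mathcal{P}_k) = \sum_{d \geq 1} \mu(d)/\varphi(\ell(dk))$.

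To make this rigorous I would truncate at a threshold $d \leq D(x) \to \infty$, control the head via the Brun--Titchmarsh inequality $\#\{p \leq x : m \mid p-1\} \ll x/\bigl(\varphi(m) \log(x/m)\bigr)$ valid uniformly for $m \leq x^{1-\varepsilon}$, and bound the tail using the crude estimate $\varphi(\ell(dk)) \geq \varphi(dk) \gg_\varepsilon (dk)^{1-\varepsilon}$ together with $\ell(dk) \geq dk$ for absolute convergence. This yields both the existence of $\bm{r}(\mathcal{P}_k)$ and the closed-form series.

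For the two exclusion cases I would argue directly. Take an odd prime $p \in \mathcal{P}_k$, so $\ell(k) \mid p-1$. In the first case $k \mid \gcd\!\bigl(\ell(k), F_{\ell(k)}\bigr) \mid \gcd(p-1, F_{p-1}) = k$, forcing equality and contradicting $\gcd\!\bigl(\ell(k), F_{\ell(k)}\bigr) \neq k$. In the second case, $p$ odd and $\ell(k)$ odd give $2\ell(k) = \ell(qk) \mid p-1$, whence $qk \mid \gcd(p-1, F_{p-1}) = k$, contradicting $q \nmid k$. Since one checks $2 \in \mathcal{P}_k$ iff $k=1$, we obtain $\mathcal{P}_k \subseteq \{2\}$ in both cases.

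The main obstacle will be proving $\bm{r}(\mathcal{P}_k) > 0$ in the remaining case. A short arithmetic check shows that this hypothesis is equivalent to $\varphi(\ell(qk)) > \varphi(\ell(k))$ for every prime $q$: writing $\ell(qk) = \ell(k) \cdot s$ with $s \geq 1$, the equation $\varphi(\ell(qk)) = \varphi(\ell(k))$ forces either $s = 1$ (i.e.\ $\ell(qk) = \ell(k)$, which gives $qk \mid \gcd(\ell(k), F_{\ell(k)})$ and so contradicts (A)) or $s = 2$ with $\ell(k)$ odd (excluded by (B), after verifying that $q \mid k$ cannot occur with $s=2$ when $\ell(k)$ is odd). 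I would then work with the truncations
\[
T_N \;:=\; \sum_{d \,\mid\, \prod_{q \leq N} q} \frac{\mu(d)}{\varphi(\ell(dk))},
\]
interpreted as the relative density of primes $p$ with $\ell(k) \mid p-1$ and $\ell(qk) \nmid p-1$ for every prime $q \leq N$. The inequality $T_N - \bm{r}(\mathcal{P}_k) \leq \sum_{q > N} 1/\varphi(\ell(qk))$ (union bound on the "extra" constraints) together with the absolute convergence of the series reduces positivity to exhibiting a uniform lower bound $T_N \geq c > 0$. The delicate and core difficulty is that $\ell$ is \emph{not} multiplicative, so $T_N$ does not split into a genuine Euler product; one must instead carry out the inclusion-exclusion directly at the level of residue classes modulo $M_N := \lcm_{d \,\mid\, \prod_{q \leq N} q}\ell(dk)$, and use the strict inequalities $\varphi(\ell(qk)) > \varphi(\ell(k))$ to guarantee that admissible residue classes persist in positive proportion as $N \to \infty$.
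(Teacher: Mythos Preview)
Your outline conflates the paper's two theorems: what you sketch is really the proof of the \emph{series formula} (Theorem~\ref{thm:series}), and you then try to read off existence and positivity from it. This is a legitimate strategy, but two of your steps do not go through as written.

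\textbf{The convergence/tail estimate is wrong.} You claim the tail $\sum_{d>D}1/\varphi(\ell(dk))$ is controlled by ``the crude estimate $\varphi(\ell(dk))\ge\varphi(dk)\gg_\varepsilon(dk)^{1-\varepsilon}$''. But $\sum_d (dk)^{-(1-\varepsilon)}$ \emph{diverges} for every $\varepsilon>0$; indeed even $\sum_d 1/\varphi(dk)$ diverges, and the pointwise bound $\ell(dk)\ge dk$ buys nothing. Absolute convergence of $\sum_d 1/\varphi(\ell(dk))$ is a genuinely nontrivial fact: in the paper it rests on Mastrostefano's bound $\sum_{n>y}1/\ell(n)<\exp\!\big(-\delta(\log y)^{1/2}(\log\log y)^{1/2}\big)$ (Lemma~\ref{lem:bound-sum-ell}, then Lemma~\ref{lem:bound-phi-ell}). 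Without this, your truncation argument for existence collapses, and so does the bound $T_N-\bm r(\mathcal P_k)\le\sum_{q>N}1/\varphi(\ell(qk))$ that you need for positivity. (For the latter you only need the \emph{prime} sum, which does admit an elementary proof via $\ell(p)=p\,z(p)\gg p\log p$; but you never isolate this.) The paper's proof of Theorem~\ref{thm:existence} sidesteps the full-sum issue entirely: it rewrites $\mathcal P_k$ as $\mathcal Q(\mathcal M_k)$ using the characterisation of $\mathcal A_k$ (Lemma~\ref{lem:Ak-characterization}), so that only the prime sum $\sum_p 1/\varphi(\ell(pk))$ enters.

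\textbf{The positivity step is only a wish, not an argument.} You correctly reduce to showing $T_N\ge c>0$ uniformly in~$N$, correctly observe that $\ell$ is not multiplicative so no Euler product is available, and correctly translate the ``otherwise'' hypothesis into $\varphi(\ell(qk))>\varphi(\ell(k))$ for every prime~$q$. But ``use the strict inequalities\ldots\ to guarantee that admissible residue classes persist in positive proportion'' is precisely the hard part, and you give no mechanism. The paper supplies one: working in $(\mathbb Z/L\mathbb Z)^\times$ via CRT and discrete logarithms, it reduces to counting lattice points in a box avoiding coordinate sublattices, and invokes a product-type lower bound (Lemma~\ref{lem:multidimensional}, from Leonetti--Sanna) to obtain
\[
T_N \;\ge\; \frac{1}{\varphi(\ell(k))}\prod_{q\le N}\Bigl(1-\frac{\varphi(\ell(k))}{\varphi(\ell(qk))}\Bigr),
\]
which is bounded below by a positive constant exactly because each factor is in $(0,1)$ and $\sum_q 1/\varphi(\ell(qk))<\infty$. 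Your proposal lacks any analogue of this counting lemma.

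Your treatment of the two exclusion cases is fine and matches the paper's logic.
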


For instance, $k = 17$ is the smallest positive integer such that $\bm{d}(\mathcal{A}_k) > 0$, since $\gcd\!\big(\ell(k),F_{\ell(k)}\big) = k$ (see Lemma~\ref{lem:Ak-characterization} below) but $\bm{r}(\mathcal{P}_k) = 0$, since $\ell(k) = 153$ is odd and $\ell(pk) = 2\,\ell(k)$ for $p = 2$.

Our second result gives an explicit expression for the relative density of $\mathcal{P}_k$ in terms of an absolutely convergent series.

\begin{theorem}\label{thm:series}
    For each positive integer $k$, the relative density of $\mathcal{P}_k$ is
    \begin{equation*}
        \mathbf{r}(\mathcal{P}_k) = \sum_{d \,=\, 1}^\infty \frac{\mu(d)}{\varphi(\ell(dk))} ,
    \end{equation*}
    where $\mu$ is the M\"obius function, $\varphi$ is the Euler totient function, and the series converges absolutely.
\end{theorem}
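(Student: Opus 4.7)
The plan is to translate the condition defining $\mathcal{P}_k$ into divisibility by a single modulus via the function $\ell$, apply Möbius inversion, and then invoke the prime number theorem in arithmetic progressions. For any positive integer $t$, the relation $t \mid F_m$ holds if and only if $z(t) \mid m$, so $t \mid \gcd(p-1, F_{p-1})$ is equivalent to $\ell(t) = \lcm(t,z(t)) \mid p-1$. Coupled with the standard identity
\begin{equation*}
    \mathbf{1}\!\left[\gcd(a,b)=k\right] \;=\; \sum_{d \geq 1} \mu(d)\, \mathbf{1}\!\left[dk \mid a\right]\, \mathbf{1}\!\left[dk \mid b\right],
\end{equation*}
this yields, after summation over primes $p \leq x$,
\begin{equation*}
    \#(\mathcal{P}_k \cap [1,x]) \;=\; \sum_{d \,:\, \ell(dk) \leq x} \mu(d)\, \pi\!\bigl(x;\, \ell(dk),\, 1\bigr),
\end{equation*}
a finite sum since $\pi(x;q,1) = 0$ whenever $q > x$, and the upper bound $\ell(dk) \geq dk$ forces $d \leq x/k$.

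I would then divide through by $\pi(x)$ and pass to the limit. Term-by-term, Dirichlet's theorem gives $\pi(x;\ell(dk),1)/\pi(x) \to 1/\varphi(\ell(dk))$, so the expected answer is $\sum_d \mu(d)/\varphi(\ell(dk))$. To justify the interchange of limit and summation, I would split at a threshold $D = D(x)$ growing slowly with $x$. In the head $d \leq D$, Siegel--Walfisz supplies a uniform error of the shape $O\!\bigl(x\exp(-c\sqrt{\log x})\bigr)$, producing the main term $\pi(x)\sum_{d \leq D} \mu(d)/\varphi(\ell(dk))$ plus an admissible error. In the tail $D < d$ with $\ell(dk) \leq x$, the Brun--Titchmarsh inequality bounds each term by $x/\bigl(\varphi(\ell(dk))\log(x/\ell(dk))\bigr)$, which, after partial summation, is controlled by the tail of the series $\sum_{d > D} 1/\varphi(\ell(dk))$.

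The final ingredient is absolute convergence of that series. Since $\varphi(n) \gg n/\log\log n$, this reduces, up to harmless logarithmic losses, to the absolute convergence of $\sum_d 1/\ell(dk)$, which is precisely the convergence statement underlying the Sanna--Tron formula for $\bm{d}(\mathcal{A}_k)$ recalled in the introduction. I expect the main obstacle to be calibrating the threshold $D(x)$ so that the Siegel--Walfisz regime (moduli up to a fixed power of $\log x$) meshes with the Brun--Titchmarsh regime (moduli up to $x$) and with the tail of the series; this requires a careful but essentially standard parameter choice, of the same type used in prior density computations for $\gcd$-defined sets of integers.
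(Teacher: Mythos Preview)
Your approach is correct and, in one respect, more direct than the paper's. The paper does not apply the identity $\mathbf{1}[\gcd(a,b)=k]=\sum_{d}\mu(d)\,\mathbf{1}[dk\mid a]\,\mathbf{1}[dk\mid b]$ to $\mathcal{P}_k$ directly; instead it introduces the intermediate set $\mathcal{R}_k$ of primes $p$ with $k\mid\gcd(p-1,F_{p-1})$ and such that every prime factor of $\gcd(p-1,F_{p-1})$ divides $k$, proves $\mathbf{r}(\mathcal{R}_k)=\sum_{(d,k)=1}\mu(d)/\varphi(\ell(dk))$, and then recovers $\mathbf{r}(\mathcal{P}_k)$ by a second M\"obius over $d\mid k$. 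Your single inversion collapses these two steps, arriving at $\#(\mathcal{P}_k\cap[1,x])=\sum_d \mu(d)\,\pi(x;\ell(dk),1)$ in one stroke; the paper reaches the same expression only after recombining the double sum at the end.

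On the analytic side the paper does essentially what you sketch, with one refinement worth flagging: it uses \emph{two} thresholds $z=(\log x)^{A/2}/(\sqrt{2}k)$ and $y=x^{1/4}/(\sqrt{2}k)$, yielding four error terms. Siegel--Walfisz handles $d\le z$ (so $\ell(dk)\le(\log x)^A$ via $\ell(n)\le 2n^2$), Brun--Titchmarsh in the form $\Delta(x;b,1)\ll_\varepsilon x/(\varphi(b)\log x)$ for $b\le x^{1-\varepsilon}$ handles $z<d\le y$, and for $y<d\le x$ the paper falls back to the trivial bound $\pi(x;b,1)\le x/b$, since Brun--Titchmarsh with $\log(x/b)$ degenerates as $b\to x$. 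Your two-range outline implicitly needs this third regime; once you insert it, the tail is controlled by the quantitative bound $\sum_{n>y}1/\varphi(\ell(n))\ll(\log\log y)\exp(-\delta\sqrt{\log y\,\log\log y})$, which the paper derives from the corresponding bound for $\sum_{n>y}1/\ell(n)$ exactly as you anticipate via $\varphi(n)\gg n/\log\log n$.
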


Leonetti and Sanna~\cite{MR3859754} proved the following upper and lower bounds for the counting function of the set $\mathcal{A} := \{\gcd(n, F_n) : n \geq 1\}$.

\begin{theorem}\label{thm:A-bounds}
We have
\begin{equation}\label{equ:A-bounds}
    \frac{x}{\log x} \ll \#\big(\mathcal{A} \cap [1, x] \big) = o(x) ,
\end{equation}
as $x \to +\infty$.
\end{theorem}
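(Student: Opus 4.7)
The plan is to split Theorem~\ref{thm:A-bounds} into its upper and lower bounds and handle them separately. The upper bound $\#(\mathcal{A}\cap[1,x])=o(x)$ already appears in Leonetti and Sanna, and none of the new machinery of this paper is needed for it, so I would just quote it. The novelty lies in the lower bound $\#(\mathcal{A}\cap[1,x]) \gg x/\log x$, for which Theorems~\ref{thm:existence} and~\ref{thm:series} give a route avoiding the Cubre--Rouse input used previously.

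The first step is an elementary reduction. Whenever $\mathcal{P}_k$ contains a prime $p$, the integer $n := p-1$ satisfies $\gcd(n, F_n) = k$ with $k \le n$, so $k \in \mathcal{A} \cap [1, p-1]$. Consequently
\[
    \#(\mathcal{A}\cap[1,x]) \;\geq\; \#\{k \le x : \mathcal{P}_k \neq \emptyset\} \;\geq\; \#\{k \le x : \mathbf{r}(\mathcal{P}_k) > 0\},
\]
the second inequality because positive relative density forces $\mathcal{P}_k$ to be infinite. The problem therefore reduces to producing $\gg x/\log x$ integers $k \le x$ with $\mathbf{r}(\mathcal{P}_k) > 0$, which is precisely the lower bound on the counting function of the positivity set already advertised in the paper's abstract as an independent result.

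To establish this counting bound elementarily, I would feed the criterion of Theorem~\ref{thm:existence} with a concrete family of $k$. A natural target is the set of $k$ of the form $k = q - 1$ (or a fixed divisor thereof) for primes $q$ in a congruence class modulo a small modulus that forces $\ell(k)$ to be even; even parity of $\ell(k)$ immediately discharges the second exclusion clause of Theorem~\ref{thm:existence} and reduces the positivity test to the identity $\gcd(\ell(k), F_{\ell(k)}) = k$, which is the characterization of membership in $\mathcal{A}$ recorded in Lemma~\ref{lem:Ak-characterization}.

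The main obstacle is controlling the Fibonacci side of $\gcd(\ell(k), F_{\ell(k)})$, which may absorb spurious prime factors coming from primes $p \nmid k$ with $z(p) \mid \ell(k)$. Ruling this pathology out on a density-$\gg 1/\log x$ subfamily requires combining standard estimates on $z$ with the multiplicativity $\ell(k_1 k_2) = \lcm(\ell(k_1), \ell(k_2))$ for coprime arguments, and counting primes in the chosen congruence class via Dirichlet's theorem. Once this is done, the inclusion chain above delivers $\#(\mathcal{A}\cap[1,x]) \gg x/\log x$ and completes the new, Cubre--Rouse-free proof of Theorem~\ref{thm:A-bounds}.
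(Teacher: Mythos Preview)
Your reduction is correct and matches the paper: quoting Leonetti--Sanna for the upper bound, and for the lower bound passing through the inclusion $\mathcal{K}\subseteq\mathcal{A}$, so that it suffices to show $\#(\mathcal{K}\cap[1,x])\gg x/\log x$. The divergence is in how that last count is obtained, and there your outline has a genuine gap.

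You propose to take $k=q-1$ for primes $q$ in a suitable residue class and to verify the positivity criterion of Theorem~\ref{thm:existence} for each such $k$. Forcing $\ell(k)$ even is easy, but the clause $\gcd(\ell(k),F_{\ell(k)})=k$ is exactly the hard part, and you do not actually carry it out; the sentence about ``combining standard estimates on $z$ with the multiplicativity of $\ell$'' is not a mechanism. In fact the identity fails already for small shifted primes (e.g.\ $k=6=q-1$ with $q=7$ gives $\ell(6)=12$ and $\gcd(12,F_{12})=12\neq6$), so an arbitrary congruence restriction on $q$ will not repair it; one would need precise control over which primes have $z(p)\mid\ell(k)$, and nothing in the paper's toolkit delivers that uniformly over a family of growing $k$.

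The paper avoids this entirely by working from a \emph{single} seed. It proves a propagation lemma: if $10\mid k$, $\bm{r}(\mathcal{P}_k)>0$, and $p\in\mathcal{P}_k$, then $kp\in\mathcal{K}$. The proof uses only Lemma~\ref{lem:basic}: for $p\in\mathcal{P}_k$ with $5\mid k$ one has $z(p)\mid p-1$, and a short argument via Lemma~\ref{lem:basic}\ref{ite:basic:fibratio} shows $\gcd\!\big(p(p-1),F_{p(p-1)}\big)=kp$, whence $kp\in\mathcal{A}$; since $2\mid k$ the parity clause is vacuous, so $kp\in\mathcal{K}$. One then checks by hand that $k=10$ satisfies $\ell(10)=30$ and $\gcd(30,F_{30})=10$, hence $\bm{r}(\mathcal{P}_{10})>0$ by Theorem~\ref{thm:existence}, and the propagation lemma gives $\{10p:p\in\mathcal{P}_{10}\}\subseteq\mathcal{K}$, which is already $\gg x/\log x$ elements below $x$. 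Thus the condition $\gcd(\ell(k),F_{\ell(k)})=k$ is verified \emph{once}, for $k=10$, rather than for an infinite family as your plan requires.
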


As an application of Theorem~\ref{thm:existence}, we provide an alternative proof of the lower bound in~\eqref{equ:A-bounds}.
We remark that our proof uses quite elementary methods, while Leonetti and Sanna's proof relies on a result of Cubre and Rouse~\cite{MR3251719}, which in turn is proved by Galois theory and Chebotarev's density theorem.

Let $\mathcal{K}$ be the set of positive integers $k$ such that $\bm{r}(\mathcal{P}_k) > 0$.
We have the following upper and lower bounds for the counting function of $\mathcal{K}$.

\begin{proposition}\label{prop:K-bounds}
We have
\begin{equation*}
    \frac{x}{\log x} \ll \#\big(\mathcal{K} \cap [1,x]\big) = o(x) ,
\end{equation*}
as $x \to +\infty$.
\end{proposition}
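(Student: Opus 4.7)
\emph{Upper bound.} For every $k\in\mathcal K$, the set $\mathcal P_k$ is nonempty (since $\mathbf{r}(\mathcal P_k)>0$), so picking any prime $p\in\mathcal P_k$ and setting $n:=p-1$ exhibits $k=\gcd(n,F_n)\in\mathcal A$. Hence $\mathcal K\subseteq\mathcal A$, and Theorem~\ref{thm:A-bounds} gives $\#(\mathcal K\cap[1,x])\le\#(\mathcal A\cap[1,x])=o(x)$.

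\emph{Lower bound.} The plan is to produce $\gg x/\log x$ primes $q\le x$ lying in $\mathcal K$ by verifying the criterion of Theorem~\ref{thm:existence} for a suitable family. Since distinct primes yield distinct elements of $\mathcal K$, this suffices.

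For a prime $q\ge 7$ one has $\ell(q)=qz(q)$ with $\gcd(q,z(q))=1$, and Theorem~\ref{thm:existence} implies $q\in\mathcal K$ as soon as $z(q)$ is even (securing the second clause) and $\gcd(qz(q),F_{qz(q)})=q$ (securing the first). Using the standard valuation formula $v_r(F_n)=v_r(F_{z(r)})+v_r(n/z(r))$ for $z(r)\mid n$, together with the elementary observation that, for $q\ge 7$, no prime $r\mid z(q)$ distinct from $q$ can have $q\mid z(r)$ (one gets $r\le z(q)\le q+1$ and $z(r)\le r+1$, after which Carmichael's primitive divisor theorem rules out $z(r)=q$), the latter equality is equivalent to $q$ being non--Wall--Sun--Sun together with $z(r)\nmid z(q)$ for every prime $r\mid z(q)$ with $r\ne q$. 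Call $n$ \emph{good} if it has this last property; then every non--Wall--Sun--Sun prime $q\ge 7$ with $z(q)$ good and even lies in $\mathcal K$.

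I would next fix a residue class $a$ modulo a small modulus $M$ (say $M=60$) such that
\begin{enumerate}
\item every prime $q\equiv a\pmod M$ has $z(q)$ even, which can be arranged by prescribing $q\bmod 4$ and the Legendre symbol $\bigl(\tfrac{q}{5}\bigr)$; and
\item the small primes $r$ most likely to violate goodness of $z(q)$ are ruled out by congruence constraints on $q$ (e.g., $q\equiv 2\pmod 3$ forces $3\nmid z(q)$, so $r=2$ with $z(2)=3$ never arises; $q\not\equiv\pm 1\pmod 5$ forces $5\nmid z(q)$, neutralising $r=5$; and so on).
\end{enumerate}
Dirichlet's theorem on primes in arithmetic progressions produces $\gg x/\log x$ primes $q\le x$ in this class. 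To handle the remaining "sporadic" cases, in which some larger prime $r\mid z(q)$ happens to have $z(r)\mid z(q)$, I would run an elementary Brun-type sieve exploiting the convergence of $\sum_{r\text{ prime}}1/\ell(r)$ (arising from $\ell(r)\asymp r^2$ on average, and formally supplied by the absolute convergence of the series in Theorem~\ref{thm:series}). The sieve bounds the density of "sporadically bad" $q$ in the progression by a quantity strictly less than the density of the progression itself, preserving the lower bound $\gg x/\log x$.

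\emph{Main obstacle.} The crux is the last step: the purely elementary control of the "sporadically bad" primes in the chosen progression, i.e.\ those primes $q$ in the class for which some larger $r\mid z(q)$ accidentally satisfies $z(r)\mid z(q)$. It is here that the argument replaces the Chebotarev-type input of Cubre and Rouse---the ingredient used by Leonetti and Sanna in their proof of the lower bound for $\mathcal A$---by an analytic sieve bound rooted in the convergence of $\sum 1/\ell(r)$. Choosing the modulus $M$ and residue $a$ so that this sieve actually goes through, without shrinking the progression to density zero, is the delicate balance the proof must strike.
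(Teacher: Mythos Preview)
Your upper bound argument coincides with the paper's: both observe $\mathcal{K}\subseteq\mathcal{A}$ (any $k\in\mathcal{K}$ is realised as $\gcd(p-1,F_{p-1})$ for some $p\in\mathcal{P}_k$) and then invoke Theorem~\ref{thm:A-bounds}.

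For the lower bound your route is genuinely different and, as written, does not go through. The claim ``$q\equiv 2\pmod 3$ forces $3\nmid z(q)$'' is false: $q=17$ has $z(17)=9$ and $q=23$ has $z(23)=24$, both with $q\equiv 2\pmod 3$. This is not a cosmetic slip, because the obstruction coming from $r=2$ cannot be removed by your sieve either: the condition to avoid is $z(2)=3\mid z(q)$, i.e.\ $\ell(2)=6\mid z(q)$, and the only congruence information this forces on $q$ is $6\mid q\pm 1$, which holds for every prime $q>3$. So sieving on $r=2$ via the inclusion $\{q:\ell(r)\mid z(q)\}\subseteq\{q:\ell(r)\mid q\pm 1\}$ discards the whole progression. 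Determining the genuine density of $\{q:3\mid z(q)\}$ among primes is exactly the kind of question that needs the Cubre--Rouse/Chebotarev input you set out to avoid, so the ``delicate balance'' you flag is in fact an impasse for this strategy. (A side remark: the Wall--Sun--Sun condition is irrelevant here, since $q\nmid z(q)$ gives $q^2\nmid\ell(q)$, so the $q$-part of $\gcd(\ell(q),F_{\ell(q)})$ is automatically $q$.)

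The paper sidesteps all of this by not searching for \emph{primes} in $\mathcal{K}$. It fixes $k=10$, checks that $\ell(10)=30$ is even and $\gcd(30,F_{30})=10$, so Theorem~\ref{thm:existence} gives $\bm{r}(\mathcal{P}_{10})>0$ directly. The key new ingredient is Lemma~\ref{lem:AcapK}: for $p\in\mathcal{P}_{10}$ one shows, via Lemma~\ref{lem:basic}\ref{ite:basic:fibratio}--\ref{ite:basic:zetap}, that $\gcd\!\big(p(p-1),F_{p(p-1)}\big)=10p$; hence $10p\in\mathcal{A}$, and since $2\mid 10\mid\ell(10p)$ the criterion of Theorem~\ref{thm:existence} is met, giving $10p\in\mathcal{K}$. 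Because $\mathcal{P}_{10}$ has positive relative density, the integers $10p$ with $p\in\mathcal{P}_{10}\cap[1,x/10]$ already supply $\gg x/\log x$ elements of $\mathcal{K}\cap[1,x]$, with no analysis of $z(q)$ for variable primes $q$ required.
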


We remark that both Theorem~\ref{thm:existence} and Theorem~\ref{thm:series} can be generalized to non-degenerate Lucas sequences, that is, integer sequences $(u_n)$ such that $u_1 = 1$, $u_2 = a_1$, and $u_n = a_1 u_{n - 1} + a_2 u_{n - 2}$, for every integer $n \geq 2$, where $a_1, a_2$ are non-zero relatively prime integers such that the ratio of the roots of $X^2 - a_1 X - a_2$ is not a root of unity.
We decided to focus on the sequence of Fibonacci numbers in order to simplify the exposition.

A generalization in another direction could be studying the sets of primes
\begin{equation*}
    \mathcal{P}_k^{(s)} := \{p : \gcd(p + s, F_{p + s}) = k\} ,
\end{equation*}
for integers $k \geq 1$ and $s$.

\subsection*{Acknowledgments}

C.~Sanna is a member of GNSAGA of INdAM and of CrypTO, the group of Cryptography and Number Theory of Politecnico di Torino.

\section{Preliminaries on primes in certain residue classes}

We shall need a mild generalization (Theorem~\ref{thm:primes-residue-classes} below) of a result of Leonetti and Sanna~\cite{MR3846400} on primes in certain residue classes.
First, we have to introduce some notation.
For all $x \leq y$, let $\llbracket x, y \rrbracket := [x, y] \cap \mathbb{N}$.
For vectors $\bm{x} = (x_1, \dots, x_d)$ and $\bm{y} = (y_1, \dots, y_d)$ in $\mathbb{N}^d$, let $\|\bm{x}\| := x_1 \cdots x_d$, $\llbracket \bm{x}, \bm{y} \rrbracket := \llbracket x_1, y_1 \rrbracket \times \cdots \times \llbracket x_d, y_d \rrbracket$, $\bm{x}\bm{y} := (x_1 y_1, \dots, x_d y_d)$, and $\bm{x}/\bm{y} := (x_1 / y_1, \dots, x_d / y_d)$.
Let $\bm{0}$, respectively $\bm{1}$, be the vector of $\mathbb{N}^d$ with all components equal to $0$, respectively $1$.
For every $\bm{m} = (m_1, \dots, m_d) \in \mathbb{N}^d$, write $\bm{x} \equiv \bm{y} \pmod {\bm{m}}$ if and only if $x_i \equiv y_i \pmod {m_i}$ for each $i \in \llbracket 1, d \rrbracket$, and write instead $\bm{x} \not\equiv \bm{y} \pmod {\bm{m}}$ if and only if $x_i \not\equiv y_i \pmod {m}$ for at least one $i \in \llbracket 1, d \rrbracket$.

\begin{lemma}\label{lem:multidimensional}
Let $d$ be a positive integer and let $\bm{c}_1, \dots, \bm{c}_k, \bm{d} \in \mathbb{N}^d$ be vectors such that $\bm{c}_1 \cdots \bm{c}_k \equiv \bm{0} \pmod {\bm{d}}$ and $\bm{d} \equiv \bm{0} \pmod {\bm{c}_i}$ for each $i \in \llbracket 1, k \rrbracket$. 
Then the set $\mathcal{X}$ of all $\bm{x} \in \llbracket \bm{1}, \bm{d} \rrbracket$ such that $\bm{x} \not\equiv \bm{0} \pmod {\bm{c}_i}$ for each $i \in \llbracket 1, k \rrbracket$ satisfies
\begin{equation*}
\#\mathcal{X} \geq \|\bm{d}\| \cdot \prod_{i \,=\, 1}^k \left(1 - \frac1{\|\bm{c}_i\|}\right) .
\end{equation*}
\end{lemma}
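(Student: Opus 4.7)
My plan is to reduce the counting problem to a monotone correlation question via the Harris/FKG inequality, after a Chinese Remainder decomposition. I would start by recasting the statement probabilistically: $\llbracket \bm{1}, \bm{d} \rrbracket$ is in bijection with the finite group $G := \prod_{j=1}^{d} \mathbb{Z}/d_j\mathbb{Z}$, so $\#\mathcal{X} = \|\bm{d}\| \cdot \Pr[\bm{x} \notin A_i \text{ for every } i]$, where $\bm{x}$ is uniform on $G$ and $A_i := \{\bm{x} \in G : c_{i,j} \mid x_j \text{ for all } j\}$. Because $\bm{c}_i \mid \bm{d}$ componentwise, each $A_i$ is a subgroup of index $\|\bm{c}_i\|$, so $\Pr[\bm{x} \notin A_i] = 1 - 1/\|\bm{c}_i\|$.

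Next, by CRT each factor $\mathbb{Z}/d_j\mathbb{Z}$ splits into its prime-power pieces, giving $G \cong \prod_{(p,j)} \mathbb{Z}/p^{b_{p,j}}\mathbb{Z}$ with $b_{p,j} := v_p(d_j)$, and all coordinates independent and uniform. For each pair $(p, j)$, let $V_{p,j} \in \{0, 1, \dots, b_{p,j}\}$ denote the $p$-adic valuation of the corresponding coordinate (using the convention $v_p(0) = b_{p,j}$). Under this reindexing, the event $A_i$ becomes $\{V_{p,j} \geq v_p(c_{i,j}) \text{ for all } (p, j)\}$, which is visibly an up-set in the product order on the chains $\{0, 1, \dots, b_{p,j}\}$.

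I would then apply the Harris (FKG) inequality: on a product of totally ordered sets endowed with a product measure, any finite collection of up-sets is positively correlated, and hence so is the collection of their complements (down-sets). A routine induction on $k$ starting from the two-event case thus yields
\[
\Pr\!\Big[\bigcap_{i} \overline{A_i}\Big] \;\geq\; \prod_i \Pr[\overline{A_i}] \;=\; \prod_i\!\left(1 - \frac{1}{\|\bm{c}_i\|}\right),
\]
and multiplying by $|G| = \|\bm{d}\|$ gives the claimed bound.

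I expect the main conceptual step to be recognizing the correct partial order (the product of $p$-adic valuation chains) that turns each multi-dimensional divisibility event into an up-set; once this translation is in place, the FKG step is essentially textbook, and log-supermodularity of the measure is automatic from independence. As a sanity check, I note that the hypothesis $\bm{c}_1 \cdots \bm{c}_k \equiv \bm{0} \pmod{\bm{d}}$ is not actually used in this argument, so it is presumably present to support the later applications of the lemma to primes in residue classes.
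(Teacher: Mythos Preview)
Your argument is correct. The paper itself does not supply a proof but cites \cite{MR3846400}, where Leonetti and Sanna establish the lemma by an elementary induction on $k$; the extra hypothesis $\bm{c}_1 \cdots \bm{c}_k \equiv \bm{0} \pmod{\bm{d}}$ is part of that inductive scheme. Your route is genuinely different: after the CRT splitting, the $p$-adic valuations $V_{p,j}$ are independent, each divisibility event $A_i$ becomes an up-set on the product of chains $\prod_{(p,j)}\{0,\dots,b_{p,j}\}$ under the pushed-forward product measure, and Harris/FKG gives $\Pr\big[\bigcap_i\overline{A_i}\big]\geq\prod_i\Pr\big[\overline{A_i}\big]$ at once. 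A pleasant by-product, as you observed, is that the hypothesis $\bm{c}_1 \cdots \bm{c}_k \equiv \bm{0} \pmod{\bm{d}}$ is never used, so your version of the lemma is slightly stronger; that condition is there for the cited proof rather than for the downstream applications in this paper. The only trade-off is that the cited argument is self-contained counting, whereas yours appeals to a correlation inequality---though Harris's inequality on product spaces is standard and itself has a short inductive proof.
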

\begin{proof}
See~\cite[Lemma~2.1]{MR3846400}.
\end{proof}

For all positive integers $a_0, \dots, a_k$, let $\mathcal{Q}(a_0, \dots, a_k)$ be the set of primes $p$ such that $p \equiv 1 \pmod {a_0}$ and $p\not\equiv 1 \pmod {a_i}$ for every $i \in \llbracket 1, k \rrbracket$.

\begin{theorem}\label{thm:primes-residue-classes}
Let $a_0,\dots,a_k$ be positive integers with $a_0 \mid a_i$ for each $i \in \llbracket 1, k \rrbracket$.
Then the relative density of $\mathcal{Q} := \mathcal{Q}(a_0, \dots, a_k)$ exists and satisfies
\begin{equation}\label{equ:rP0}
\bm{r}(\mathcal{Q}) \geq \frac1{\varphi(a_0)} \prod_{i \,=\, 1}^k \left(1-\frac{\varphi(a_0)}{\varphi(a_i)}\right) .
\end{equation}
\end{theorem}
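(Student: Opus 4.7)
The plan is to combine Dirichlet's theorem on primes in arithmetic progressions with Lemma~\ref{lem:multidimensional}.

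Set $L := \lcm(a_0, \dots, a_k)$; since $a_0 \mid a_i$ for each $i \geq 1$, in fact $L = \lcm(a_1, \dots, a_k)$ and $a_0 \mid L$. The conditions defining $\mathcal{Q}$ depend on $p$ only through its residue modulo $L$, so $\mathcal{Q}$ differs by a finite set from a union of primitive residue classes modulo $L$. By Dirichlet's theorem, $\bm{r}(\mathcal{Q})$ therefore exists and equals $N/\varphi(L)$, where
\[
    N := \#\bigl\{ r \in (\mathbb{Z}/L\mathbb{Z})^* : r \equiv 1 \pmod{a_0},\ r \not\equiv 1 \pmod{a_i} \text{ for each } i \in \llbracket 1, k \rrbracket \bigr\}.
\]
The real work is to show $N \geq (\varphi(L)/\varphi(a_0)) \prod_{i=1}^{k}\bigl(1 - \varphi(a_0)/\varphi(a_i)\bigr)$, after which dividing by $\varphi(L)$ gives the announced bound.

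Introduce the subgroup $H := \{r \in (\mathbb{Z}/L\mathbb{Z})^* : r \equiv 1 \pmod{a_0}\}$, of order $\varphi(L)/\varphi(a_0)$, and, for each $i \in \llbracket 1, k \rrbracket$, the subgroup $H_i := \{r \in H : r \equiv 1 \pmod{a_i}\}$, of index $\varphi(a_i)/\varphi(a_0)$ in $H$; then $N = |H \setminus \bigcup_i H_i|$. Using the CRT factorisation $L = \prod_j q_j^{e_j}$ and writing $a_i = \prod_j q_j^{\alpha_{i,j}}$, one splits $H = \prod_j H_{(j)}$ and $H_i = \prod_j H_{i,(j)}$. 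For each $j$ I would fix an isomorphism $H_{(j)} \cong \prod_\ell \mathbb{Z}/m_{j,\ell}\mathbb{Z}$ under which every $H_{i,(j)}$ is coordinate-aligned, that is, $H_{i,(j)} \cong \prod_\ell c_{i,j,\ell}\mathbb{Z}/m_{j,\ell}\mathbb{Z}$. This is possible because $H_{(j)}$ is cyclic for odd $q_j$ (a single factor suffices), while for $q_j = 2$ the standard decomposition $(\mathbb{Z}/2^{e_j}\mathbb{Z})^* = \langle -1 \rangle \times \langle 5 \rangle$ already aligns every subgroup of the form $\{r \equiv 1 \pmod{2^\alpha}\}$.

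Identifying $H$ with the box $\llbracket \bm{1}, \bm{d} \rrbracket$, where $\bm{d} := (m_{j,\ell})_{j,\ell}$, membership in $H_i$ becomes the componentwise congruence $\bm{x} \equiv \bm{0} \pmod{\bm{c}_i}$ with $\bm{c}_i := (c_{i,j,\ell})_{j,\ell}$. One then computes $\|\bm{d}\| = \varphi(L)/\varphi(a_0)$ and $\|\bm{c}_i\| = [H : H_i] = \varphi(a_i)/\varphi(a_0)$. The first hypothesis of Lemma~\ref{lem:multidimensional}, namely $c_{i,j,\ell} \mid m_{j,\ell}$, is immediate from $H_{i,(j)} \leq H_{(j)}$. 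The second, $m_{j,\ell} \mid \prod_i c_{i,j,\ell}$ for every $(j,\ell)$, follows from $\bigcap_{i=1}^k H_{i,(j)} = \{1\}$ (which holds because $\max_{i \geq 1} \alpha_{i,j} = e_j$, using $L = \lcm(a_1,\dots,a_k)$): this forces $\lcm_i(c_{i,j,\ell}) = m_{j,\ell}$ per coordinate, and $\lcm$ divides $\prod$. Invoking the lemma gives $N \geq \|\bm{d}\| \prod_i(1 - 1/\|\bm{c}_i\|)$, which is exactly the inequality above.

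The main obstacle is exhibiting the coordinate-aligned decomposition of $H_{(j)}$ when $q_j = 2$, but this boils down to a routine case analysis using the explicit structure of $(\mathbb{Z}/2^{e_j}\mathbb{Z})^*$; once this structural step is dispatched, the rest of the argument is bookkeeping.
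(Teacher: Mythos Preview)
Your proposal is correct and follows essentially the same approach as the paper: reduce via Dirichlet's theorem to counting residues modulo $L$, then use the CRT decomposition of $(\mathbb{Z}/L\mathbb{Z})^*$ (primitive roots for odd prime powers, the $\langle -1\rangle\times\langle 5\rangle$ decomposition for the $2$-part) to translate the problem into the box-counting setting of Lemma~\ref{lem:multidimensional}. The only cosmetic difference is that you work directly inside the subgroup $H$ and its subgroups $H_i$, whereas the paper first parametrises all of $(\mathbb{Z}/L\mathbb{Z})^*$ by a vector $\bm{y}(n)$ and then passes to $\bm{y}(n)/\bm{a}_0$; the resulting vectors $\bm{c}_i$, $\bm{d}$ and the verification of the hypotheses of Lemma~\ref{lem:multidimensional} are identical.
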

\begin{proof}
We generalize the proof of \cite[Theorem~1.2]{MR3846400}, which corresponds to the special case $a_0 = 1$.
Let $L := \lcm(a_0, \dots, a_k) = p_1^{e_1} \cdots p_d^{e_d}$  where $p_1 < \cdots < p_d$ are primes and $e_1, \dots, e_d$ are positive integers.
Also, let $\mathcal{S}$ be the set of integers $n \in [1, L]$ such that: $\gcd(n, L) = 1$, $n \equiv 1 \pmod {a_0}$, and $n \not\equiv 1 \pmod {a_i}$ for every $i \in \llbracket 1,k \rrbracket$.
By Dirichlet's theorem on primes in arithmetic progressions, we have that
\begin{align}\label{equ:rP1}
    \bm{r}(\mathcal{Q}) &= \lim_{x \,\to\, +\infty} \frac{\#(\mathcal{Q} \cap [1, x])}{\pi(x)} \nonumber \\
        &= \lim_{x \,\to\, +\infty} \sum_{n \,\in\, \mathcal{S}} \frac{\#\{p \leq x : p \equiv n \!\!\!\!\pmod L\}}{\pi(x)} = \frac{\#\mathcal{S}}{\varphi(L)} .
\end{align}
Hence, the relative density of $\mathcal{Q}$ exists.
Let us give a lower bound on $\#\mathcal{S}$.

First, assume that $8 \nmid L$.
Let $g_j$ be a primitive root modulo $p_j^{e_j}$, for each $j \in \llbracket 1,d \rrbracket$.
Note that $g_1$ exists when $p_1 = 2$ since $e_1 \leq 2$.
Put $\bm{b} := \big(\varphi(p_1^{e_1}), \dots, \varphi(p_d^{e_d})\big)$.
By the Chinese remainder theorem, each $n \in \llbracket 1, \ell \rrbracket$ with $\gcd(n, L) = 1$ is uniquely determined by a vector $\bm{y}(n) = (y_1(n), \dots, y_d(n)) \in \llbracket \bm{1}, \bm{b} \rrbracket$ such that $n \equiv g_j^{y_j(n)} \pmod {p_j^{e_j}}$ for each $j \in \llbracket 1,d \rrbracket$.
Write $a_i = p_1^{\alpha_{i,1}} \cdots p_d^{\alpha_{i,d}}$, where $\alpha_{i,1}, \dots,\alpha_{i,d} \geq 0$ are integers, and define $\bm{a}_i := \big(\varphi(p_1^{\alpha_{i,1}}), \dots, \varphi(p_d^{\alpha_{i,d}})\big)$ for each $i \in \llbracket 0,k \rrbracket$.
Also, put $\bm{c}_i = \bm{a}_i / \bm{a}_0$ for every $i \in \llbracket 0,k \rrbracket$, $\bm{d} := \bm{b} / \bm{a}_0$, and let $\mathcal{X}$ be defined as in Lemma~\ref{lem:multidimensional}.
At this point, it follows easily that $n \in \mathcal{S}$ if and only if $\bm{y}(n) \equiv \bm{0} \pmod {\bm{a}_0}$ and $\bm{y}(n) \not\equiv \bm{0} \pmod {\bm{a}_i}$ for each $i \in \llbracket 1, k\rrbracket$.
Therefore, the map $n \mapsto \bm{y}(n) / \bm{a}_0$ is a bijection $\mathcal{S} \to \mathcal{X}$ and, consequently, $\#\mathcal{S} = \#\mathcal{X}$.
Since $\|\bm{d}\| = \varphi(L) / \varphi(a_0)$, $\|\bm{c}_i\| = \varphi(a_i) / \varphi(a_0)$, $\bm{c}_1\cdots\bm{c}_k \equiv \bm{0} \pmod {\bm{d}}$, and $\bm{d} \equiv \bm{0} \pmod {\bm{c}_i}$ for each $i \in \llbracket 1,k \rrbracket$, we can apply Lemma~\ref{lem:multidimensional}, which gives a lower bound on $\#\mathcal{X}$, that is, on $\#\mathcal{S}$.
Then~\eqref{equ:rP1} and the lower bound on $\#\mathcal{S}$ yield~\eqref{equ:rP0}.

Now let us consider the case in which $8 \mid L$.
This is a bit more involved since there are no primitive roots modulo $2^e$, for every integer $e \geq 3$.
However, the previous arguments still work by changing $\bm{a}_i$ and $\bm{b}$ with
\begin{equation*}
\bm{a}_i := \big(2^{\max(0,\,\alpha_{i,1} - 1) - \max(0,\, \alpha_{i,1} - 2)}, 2^{\max(0,\, \alpha_{i,1} - 2)}, \varphi(p^{\alpha_{i,2}}), \ldots, \varphi(p^{\alpha_{i,d}})\big)
\end{equation*}
and
\begin{equation*}
\bm{b} = \big(2, 2^{e_1 - 2}, \varphi(p_2^{e_2}), \dots, \varphi(p_d^{e_d})\big) .
\end{equation*}
Then each $n \in \llbracket 1, \ell \rrbracket$ with $\gcd(n, L) = 1$ is uniquely determined by a vector $\bm{y}(n) = (y_0(n), \dots, y_d(n)) \in \llbracket \bm{1}, \bm{b} \rrbracket$ such that $n \equiv (-1)^{y_0(n)} 5^{y_1(n)} \pmod {2^{e_1}}$ and $n \equiv g_j^{y_j(n)} \pmod {p_j^{e_j}}$ for each $j \in \llbracket 2, d \rrbracket$.
The rest of the proof proceeds similarly to the previous case.
\end{proof}

For all positive integers $a_0, a_1, \dots$, let $\mathcal{Q}(a_0, a_1, \dots) := \bigcap_{k \,\geq\, 1} \mathcal{Q}(a_0, \dots, a_k)$.

\begin{corollary}\label{cor:primes-residue-classes}
If $a_0,a_1,\dots$ is a sequence of positive integers such that $a_0 \mid a_i$ for each integer $i \geq 1$ and the series $\sum_{i \,\geq\, 1} 1 / \varphi(a_i)$ converges, then the relative density of $\mathcal{Q} := \mathcal{Q}(a_0, a_1, \dots)$ exists.
Moreover, $\bm{r}(\mathcal{Q}) = 0$ if and only if there exists an integer $i \geq 1$ such that $a_i = a_0$, or $a_i = 2a_0$ and $a_0$ is odd.
In such a case, we have that $\mathcal{Q} \subseteq \{2\}$.
\end{corollary}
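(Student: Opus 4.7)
The natural strategy is to treat $\mathcal{Q}$ as the intersection of the nested decreasing family $\mathcal{Q}_N := \mathcal{Q}(a_0,\dots,a_N)$ and to show that $\mathbf{r}(\mathcal{Q}) = \lim_{N\to\infty}\mathbf{r}(\mathcal{Q}_N)$. By Theorem~\ref{thm:primes-residue-classes}, each $\mathbf{r}(\mathcal{Q}_N)$ exists; since $\mathcal{Q}_N \supseteq \mathcal{Q}_{N+1}$, the sequence is non-increasing and bounded, so it converges to some $L \geq 0$. The containment $\mathcal{Q} \subseteq \mathcal{Q}_N$ yields immediately $\limsup_{x\to\infty}\#(\mathcal{Q}\cap[1,x])/\pi(x) \leq L$. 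For the matching lower bound I would fix $\varepsilon > 0$, pick $N_0$ with $\sum_{j > N_0} 1/\varphi(a_j) < \varepsilon$, and use
\begin{equation*}
\#(\mathcal{Q} \cap [1,x]) \;\geq\; \#(\mathcal{Q}_{N_0} \cap [1,x]) - \sum_{j > N_0} \#\{p \leq x : p \equiv 1 \!\!\!\pmod{a_j}\}.
\end{equation*}
The finite-truncation version of the tail, $\sum_{j=N_0+1}^M$, is a finite sum whose limit divided by $\pi(x)$ is $\sum_{j=N_0+1}^M 1/\varphi(a_j) < \varepsilon$ by Dirichlet's theorem; combined with monotone convergence of $\mathbf{r}(\mathcal{Q}_M)\to L$, this will give $L \geq \mathbf{r}(\mathcal{Q}_{N_0}) - \varepsilon$ and, after passing to the liminf in $x$, $\liminf \geq L$.

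To characterise $\mathbf{r}(\mathcal{Q}) > 0$, I would invoke the explicit lower bound
\begin{equation*}
\mathbf{r}(\mathcal{Q}_N) \;\geq\; \frac{1}{\varphi(a_0)} \prod_{i=1}^{N}\!\left(1-\frac{\varphi(a_0)}{\varphi(a_i)}\right)
\end{equation*}
from Theorem~\ref{thm:primes-residue-classes}. Since $\sum_{i\geq 1} \varphi(a_0)/\varphi(a_i)=\varphi(a_0)\sum 1/\varphi(a_i)<\infty$, the infinite product converges, and it is strictly positive precisely when every factor is non-zero, i.e.\ when $\varphi(a_i)\neq \varphi(a_0)$ for all $i\geq 1$. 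Using $a_0\mid a_i$, a brief factorisation argument gives $\varphi(a_i)=\varphi(a_0)$ if and only if $a_i=a_0$, or $a_i=2a_0$ with $a_0$ odd. So outside these two exceptional scenarios, the product, hence $L$, is positive and we conclude $\mathbf{r}(\mathcal{Q})>0$.

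Conversely, in the exceptional cases I would check $\mathcal{Q}\subseteq\{2\}$ by direct inspection of the congruences. If $a_i=a_0$ for some $i\geq 1$, the defining conditions $p\equiv 1\pmod{a_0}$ and $p\not\equiv 1\pmod{a_i}$ are mutually contradictory, so $\mathcal{Q}=\varnothing$. If $a_i=2a_0$ with $a_0$ odd, then for any odd prime $p$, $p\equiv 1\pmod 2$ combined with $p\equiv 1\pmod{a_0}$ forces, by the Chinese remainder theorem, $p\equiv 1\pmod{2a_0}$, contradicting $p\not\equiv 1\pmod{a_i}$. Hence only $p=2$ can occur.

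The main technical obstacle is making rigorous the bound on $\sum_{j>N_0} \pi(x;a_j,1)/\pi(x)$ uniformly in $x$. For $a_j\leq x^{1/2}$, the Brun--Titchmarsh inequality furnishes $\pi(x;a_j,1)\leq 4x/(\varphi(a_j)\log x)$ and the corresponding piece is $\leq 4\varepsilon\,\pi(x)(1+o(1))$. For $a_j>x^{1/2}$ one must combine the trivial bound $\pi(x;a_j,1)\leq \lfloor(x-1)/a_j\rfloor$ with the observation that the contribution from any $a_j>x$ vanishes (since $p\leq x$ and $p\equiv 1\pmod{a_j}$ force $p=1$), together with a careful control on the number of indices with $a_j$ in each dyadic window, extracted from the convergence of $\sum_{j>N_0} 1/\varphi(a_j)$. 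This is the step most likely to require additional care, potentially via an inclusion--exclusion rewriting of $\#(\mathcal{Q}_N\setminus\mathcal{Q})\cap[1,x]$ instead of the crude union bound.
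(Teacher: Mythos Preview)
Your approach is essentially identical to the paper's: truncate to $\mathcal{Q}_N$, use the product lower bound of Theorem~\ref{thm:primes-residue-classes} together with the convergence of $\sum 1/\varphi(a_i)$ to get positivity in the non-exceptional case, dispose of the exceptional cases $a_i=a_0$ or $a_i=2a_0$ (with $a_0$ odd) by direct inspection of the congruences, and control the discrepancy $\#((\mathcal{Q}_N\setminus\mathcal{Q})\cap[1,x])/\pi(x)$ by $\sum_{j>N}1/\varphi(a_j)$. The paper actually handles the tail bound more tersely than you do---it simply asserts $\limsup_{x\to\infty}\pi(x)^{-1}\#\{p\leq x:\exists\,j>k,\ p\equiv 1\pmod{a_j}\}\leq\sum_{j>k}1/\varphi(a_j)$ without further comment---so your Brun--Titchmarsh discussion is a legitimate refinement rather than a departure from the paper's argument.
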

\begin{proof}
If there exists an integer $i \geq 1$ such that $a_i = a_0$, or $a_i = 2a_0$ and $a_0$ is odd, then it follows easily that $\mathcal{Q} \subseteq \{2\}$ and, consequently, $\bm{r}(\mathcal{Q}) = 0$.
Hence, assume that no such integer $i$ exists.
In particular, we have that $\varphi(a_0) < \varphi(a_i)$ for every integer $i \geq 1$. 
From Theorem~\ref{thm:primes-residue-classes} we know that, for every integer $k \geq 1$, the relative density of $\mathcal{Q}_k := \mathcal{Q}(a_0, \dots, a_k)$ exists and
\begin{align*}
r := \lim_{k \,\to\, +\infty} \bm{r}(\mathcal{Q}_k) \geq \frac1{\varphi(a_0)} \prod_{i \,=\, 1}^\infty \left(1 - \frac{\varphi(a_0)}{\varphi(a_i)}\right) > 0 ,
\end{align*}
where the infinite product converges to a positive number since $\sum_{i \,\geq\, 1} 1 / \varphi(a_i)$ converges and $\varphi(a_0) / \varphi(a_i) < 1$ for every integer $i \geq 1$.
Furthermore, for each $\varepsilon > 0$ and for every sufficiently large positive integer $k = k(\varepsilon)$, we have that
\begin{align*}
&\phantom{mm}\limsup_{x \,\to\, +\infty} \left|r - \frac{\#(\mathcal{Q} \cap [1,x])}{\pi(x)}\right| < \varepsilon + \limsup_{x \,\to\, +\infty} \frac{\#\big((\mathcal{Q}_k \setminus \mathcal{Q})\cap [1, x]\big)}{\pi(x)} \\
&\leq \varepsilon + \limsup_{x \,\to\, +\infty} \frac{\#\{p \leq x : \exists j > k \text{ s.t. } p \equiv 1 \!\!\!\pmod {a_j}\}}{\pi(x)} \leq \varepsilon + \sum_{j \,>\, k} \frac1{\varphi(a_j)} < 2 \varepsilon .
\end{align*}
Therefore, the relative density of $\mathcal{Q}$ exists and, in fact, $\bm{r}(\mathcal{Q}) = r > 0$.
\end{proof}

\section{Further preliminaries}

The next lemma summarizes some basic properties of the Fibonacci numbers and the arithmetic functions $\ell$ and $z$.

\begin{lemma}\label{lem:basic}
For all positive integers $m$, $n$ and all prime numbers $p$, we have:
\begin{enumerate}
\item\label{ite:basic:fibdiv} $F_m \mid F_n$ whenever $m \mid n$.
\item\label{ite:basic:fibratio} $\gcd(F_m / F_n, F_n) \mid m / n$ whenever $n\mid m$.
\item\label{ite:basic:zeta} $m \mid F_n$ if and only if $z(m) \mid n$.
\item\label{ite:basic:zetap} $z(p) \mid p - \left(\displaystyle\frac{p}{5}\right)$ where $\left(\displaystyle\frac{p}{5}\right)$ is a Legendre symbol.
\item\label{ite:basic:gcd} $m \mid \gcd(n, F_n)$ if and only if $\ell(m) \mid n$.
\item\label{ite:basic:lcm} $\ell(\lcm(m, n)) = \lcm(\ell(m), \ell(n))$.
\item\label{ite:basic:ellp} $\ell(p) = z(p) p$ for $p \neq 5$, and $\ell(5) = 5$.
\item\label{ite:basic:upper} $\ell(n) \leq 2n^2$.
\end{enumerate}
\end{lemma}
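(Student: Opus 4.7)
The plan is to verify each of the eight items using standard Fibonacci identities together with the definitions of $z(m)$ and $\ell(m) = \lcm(m, z(m))$; all items are essentially classical, so I will indicate the key tool for each rather than grind out every detail.

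Items (i), (iii), and (iv) are textbook. For (i), the identity $F_{m+n} = F_m F_{n+1} + F_{m-1} F_n$ gives $F_m \mid F_{km}$ by induction on $k$. For (iii), if $m \mid F_n$, I would write $n = q\,z(m) + r$ with $0 \leq r < z(m)$ and reduce the Fibonacci recurrence modulo $m$: since $m \mid F_{q\,z(m)}$ while $\gcd(F_{q\,z(m)}, F_{q\,z(m)+1}) = 1$, the pair $(F_{q\,z(m)}, F_{q\,z(m)+1}) \bmod m$ has the form $(0, u)$ with $u$ a unit mod $m$, so $F_r \equiv 0 \pmod m$, and minimality of $z(m)$ forces $r = 0$; the converse is immediate from (i). For (iv), Binet's formula combined with Fermat's little theorem applied to $\alpha, \beta$ (the roots of $x^2 - x - 1$) yields $F_{p-(p/5)} \equiv 0 \pmod p$, and (iii) then gives $z(p) \mid p - (p/5)$.

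For (ii), with $m = kn$, the key input is the congruence $F_m/F_n \equiv k\,F_{n-1}^{k-1} \pmod{F_n}$. Using $\alpha^n \equiv F_{n-1} \equiv \beta^n \pmod{F_n}$ in $\mathbb{Z}[\alpha]$, one expands $F_m/F_n = \sum_{j=0}^{k-1} \alpha^{jn}\beta^{(k-1-j)n}$ and sees that each of the $k$ summands reduces to $F_{n-1}^{k-1}$ modulo $F_n$. Since $\gcd(F_{n-1}, F_n) = 1$, this forces $\gcd(F_m/F_n, F_n) \mid k = m/n$.

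Items (v)--(vii) are formal consequences. For (v), $m \mid \gcd(n, F_n)$ iff $m \mid n$ and $m \mid F_n$, which by (iii) is equivalent to $m, z(m) \mid n$, i.e.\ to $\ell(m) \mid n$. For (vi), the analogous ``iff'' argument shows $z(\lcm(m,n)) = \lcm(z(m), z(n))$, and associativity of $\lcm$ then gives $\ell(\lcm(m,n)) = \lcm(m, n, z(m), z(n)) = \lcm(\ell(m), \ell(n))$. For (vii), when $p \neq 5$ item (iv) yields $z(p) \mid p \pm 1$, whence $\gcd(p, z(p)) = 1$ and $\ell(p) = p\,z(p)$; for $p = 5$ one checks $z(5) = 5$ directly from $F_5 = 5$. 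Finally, (viii) uses the classical bound $z(n) \leq 2n$ of Sall\'e, which gives $\ell(n) \leq n \cdot z(n) \leq 2n^2$. The only non-routine inputs are the congruence behind (ii) and Sall\'e's bound used for (viii); both are well documented in the literature, so I do not anticipate any real obstacle and would cite references rather than reprove them.
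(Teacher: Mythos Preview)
Your proposal is correct and aligns with the paper's own treatment: the paper simply declares (i)--(iv) well known (citing Stewart for (ii)), derives (v)--(vii) from (iii), (iv) and the definition of $\ell$, and obtains (viii) from Sall\'e's bound $z(n)\le 2n$, exactly as you do. You have supplied the standard arguments behind those citations rather than a different route, so nothing further is needed.
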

\begin{proof}
Facts~\ref{ite:basic:fibdiv}--\ref{ite:basic:zetap} are well known (for~\ref{ite:basic:fibratio}, see~\cite[Lemma~2]{MR491445}).
Facts~\ref{ite:basic:gcd}--\ref{ite:basic:ellp} follow easily from~\ref{ite:basic:zeta} and~\ref{ite:basic:zetap} and the definition of $\ell$ (cf.~\cite[Lemma~2.1]{MR3800663}).
Finally, fact~\ref{ite:basic:upper} follows easily from the well-known inequality $z(n) \leq 2 n$~(see, e.g., \cite{MR360447}).
\end{proof}

Now we state a result to establish if $\mathcal{A}_k \neq \varnothing$ and $\bm{d}(\mathcal{A}_k) > 0$.

\begin{lemma}\label{lem:nonempty-Ak}
$\mathcal{A}_k \neq \varnothing$ if and only if $\bm{d}(\mathcal{A}_k) > 0$ if and only if $\gcd\!\big(\ell(k),F_{\ell(k)}\big)=k$, for all integers $k \geq 1$.
\end{lemma}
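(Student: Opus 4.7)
The lemma asserts a chain of three equivalences. The forward implication $\bm{d}(\mathcal{A}_k) > 0 \Rightarrow \mathcal{A}_k \neq \varnothing$ is trivial: a set of positive density is nonempty. For the implication $\mathcal{A}_k \neq \varnothing \Rightarrow \gcd(\ell(k), F_{\ell(k)}) = k$, pick any $n \in \mathcal{A}_k$. The divisibility $k \mid \gcd(n, F_n)$ together with Lemma~\ref{lem:basic}\ref{ite:basic:gcd} gives $\ell(k) \mid n$, so Lemma~\ref{lem:basic}\ref{ite:basic:fibdiv} yields $F_{\ell(k)} \mid F_n$, hence $\gcd(\ell(k), F_{\ell(k)}) \mid \gcd(n, F_n) = k$. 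The reverse divisibility is immediate, since $k \mid \ell(k)$ by definition and $k \mid F_{z(k)} \mid F_{\ell(k)}$.

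The substantive content is the implication $\gcd(\ell(k), F_{\ell(k)}) = k \Rightarrow \bm{d}(\mathcal{A}_k) > 0$, which is essentially Theorem~1.1 of Sanna and Tron~\cite{MR3800663}. The plan is to reproduce the key steps. First, iterating Lemma~\ref{lem:basic}\ref{ite:basic:gcd} gives the set-theoretic identity
\[
\mathcal{A}_k = \{n \geq 1 : \ell(k) \mid n\} \,\setminus\, \bigcup_{q \text{ prime}} \{n \geq 1 : \ell(qk) \mid n\},
\]
since $\gcd(n, F_n) > k$ if and only if some prime $q$ satisfies $qk \mid \gcd(n, F_n)$, equivalently $\ell(qk) \mid n$. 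Second, observe that the hypothesis forces $\ell(qk) > \ell(k)$ for every prime $q$: otherwise $qk$ would divide both $\ell(k)$ and $F_{\ell(k)}$, hence $qk \mid \gcd(\ell(k), F_{\ell(k)}) = k$, a contradiction. Third, combine this with Lemma~\ref{lem:basic}\ref{ite:basic:upper} and the elementary bound $z(q) \gg \log q$ (since $F_{z(q)} \geq q$) to establish absolute convergence of $\sum_{d \geq 1} 1/\ell(dk)$; then a standard truncation plus inclusion-exclusion produces the density formula
\[
\bm{d}(\mathcal{A}_k) = \sum_{d \,=\, 1}^{\infty} \frac{\mu(d)}{\ell(dk)}.
\]

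The main obstacle is verifying that this alternating series is strictly positive. The plan is to truncate: for each $N$, the partial sum over squarefree $d$ with all prime factors at most $N$ equals the density of
\[
\mathcal{B}_N := \{n \geq 1 : \ell(k) \mid n,\ \ell(qk) \nmid n \text{ for every prime } q \leq N\},
\]
which is a nonempty union of arithmetic progressions (it contains $\ell(k)$, by the second step) and therefore has strictly positive density. These partial sums decrease to $\bm{d}(\mathcal{A}_k)$ as $N \to \infty$, with error bounded by $\sum_{q > N} 1/\ell(qk) = o(1)$. The delicate point I expect to be hardest is ruling out that the partial sums decrease all the way down to $0$; this requires a quantitative lower bound on $\bm{d}(\mathcal{B}_N)$ that does not degrade too quickly in $N$, which Sanna and Tron establish via a careful local analysis that I plan to follow.
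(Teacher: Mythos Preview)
The paper does not prove this lemma at all: its entire ``proof'' is the citation ``See~\cite[Theorem~1.3]{MR3800663}.'' Your proposal, by contrast, sketches the argument behind that citation, and your outline of the two easy implications is correct. For the hard implication you ultimately defer to the same reference (``a careful local analysis that I plan to follow''), so in substance your proposal and the paper end up relying on the same source.

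Two comments on the sketch itself. First, the bound $z(q)\gg\log q$ that you invoke gives convergence of $\sum_{q\text{ prime}}1/\ell(qk)$, which is exactly what is needed to show that $\bm{d}(\mathcal{B}_N)\to\bm{d}(\mathcal{A}_k)$; but it does \emph{not} by itself establish absolute convergence of $\sum_{d\geq 1}1/\ell(dk)$ over all integers $d$, which is a genuinely harder statement (in the present paper it is Lemma~\ref{lem:bound-sum-ell}, taken from Mastrostefano~\cite{MR3983305}). Fortunately you do not actually need the full series formula for the positivity claim, so this overclaim is harmless for the lemma at hand. Second, the ``delicate point'' you flag---that the densities $\bm{d}(\mathcal{B}_N)$ do not shrink to zero---is precisely the content of the Sanna--Tron argument; note that the strict inequality $\ell(qk)>\ell(k)$ alone is not enough, since $\prod_q\bigl(1-\ell(k)/\ell(qk)\bigr)$ could still vanish. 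Their proof uses that for $q\nmid k$ one has $q\mid \ell(qk)/\ell(k)$ (via Lemma~\ref{lem:basic}\ref{ite:basic:ellp} and~\ref{ite:basic:lcm}), which upgrades the ratio to at least $q$ and makes the product converge to a positive number.
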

\begin{proof}
See~\cite[Theorem~1.3]{MR3800663}.
\end{proof}

\begin{lemma}\label{lem:Ak-characterization}
Let $k$ and $n$ be positive integers.
Suppose that $\mathcal{A}_k \neq \varnothing$.
Then $n \in \mathcal{A}_k$ if and only if $\ell(k) \mid n$ and $m \nmid n$ for every
\begin{equation*}
m \in \big\{p\,\ell(k) : p \mid k\big\} \cup \{\ell(pk) : p \nmid k\} .
\end{equation*}
\end{lemma}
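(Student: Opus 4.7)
The plan is to start by translating $\gcd(n,F_n)=k$ into divisibility conditions. Using Lemma~\ref{lem:basic}\ref{ite:basic:gcd}, the statement $k\mid\gcd(n,F_n)$ is equivalent to $\ell(k)\mid n$, while $\gcd(n,F_n)>k$ is equivalent to the existence of a prime $p$ with $pk\mid\gcd(n,F_n)$, i.e., $\ell(pk)\mid n$. Hence $n\in\mathcal{A}_k$ if and only if $\ell(k)\mid n$ and $\ell(pk)\nmid n$ for every prime $p$.

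For primes $p\nmid k$, this already matches the conclusion. Therefore it suffices to show that, under the assumption $\mathcal{A}_k\neq\varnothing$, one has $\ell(pk)=p\,\ell(k)$ for every prime $p\mid k$, so that the condition $\ell(pk)\nmid n$ can be rewritten as $p\,\ell(k)\nmid n$.

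To establish this identity, I first prove the chain $\ell(k)\mid\ell(pk)\mid p\,\ell(k)$. The left divisibility follows because $k\mid pk$ forces $z(k)\mid z(pk)$ (any $m$ with $pk\mid F_m$ also satisfies $k\mid F_m$), so $\lcm(k,z(k))\mid\lcm(pk,z(pk))$. For the right one, writing $k=p^{a}m$ with $\gcd(p,m)=1$, the standard fact that $z(p^{a+1})\in\{z(p^a),p\,z(p^a)\}$ yields $z(pk)=\lcm(z(p^{a+1}),z(m))\mid p\,\lcm(z(p^a),z(m))=p\,z(k)$, and hence $\ell(pk)=\lcm(pk,z(pk))\mid\lcm(pk,p\,z(k))=p\,\ell(k)$. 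The ratio $\ell(pk)/\ell(k)$ is therefore a positive divisor of $p$, so it is either $1$ or $p$.

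To rule out $\ell(pk)=\ell(k)$, I invoke Lemma~\ref{lem:nonempty-Ak}, which recasts $\mathcal{A}_k\neq\varnothing$ as $\gcd(\ell(k),F_{\ell(k)})=k$. If instead $\ell(pk)=\ell(k)$ were to hold, then $pk\mid\ell(pk)=\ell(k)$ and $pk\mid F_{z(pk)}\mid F_{\ell(pk)}=F_{\ell(k)}$, forcing $pk\mid\gcd(\ell(k),F_{\ell(k)})=k$, which contradicts $p>1$. The main conceptual point, and the main obstacle, is identifying that the hypothesis $\mathcal{A}_k\neq\varnothing$ is used precisely to rule out this degeneracy: for instance, when $k=6$ one has $\ell(12)=12\neq 24=2\,\ell(6)$, consistent with the fact that $\mathcal{A}_6=\varnothing$. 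Once this is in place, the rest of the argument is routine divisibility bookkeeping.
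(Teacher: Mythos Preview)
Your argument is correct. The paper itself does not prove this lemma but simply cites \cite[Lemma~3.1]{MR3800663}, so there is no ``paper's proof'' to compare against line by line; what you have written is essentially a self-contained reconstruction of that cited result.

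Your route is the natural one and matches what one finds in the referenced paper: reduce $\gcd(n,F_n)=k$ to the conjunction of $\ell(k)\mid n$ and $\ell(pk)\nmid n$ for all primes $p$, and then, for $p\mid k$, replace $\ell(pk)$ by $p\,\ell(k)$. The only nontrivial step is the identity $\ell(pk)=p\,\ell(k)$ for $p\mid k$ under the hypothesis $\mathcal{A}_k\neq\varnothing$, and you handle it correctly: the chain $\ell(k)\mid\ell(pk)\mid p\,\ell(k)$ comes from the multiplicativity of $z$ on coprime arguments together with the standard prime-power lifting $z(p^{a+1})\mid p\,z(p^{a})$, and then Lemma~\ref{lem:nonempty-Ak} (via $\gcd(\ell(k),F_{\ell(k)})=k$) rules out the degenerate case $\ell(pk)=\ell(k)$. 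Your illustrative example $k=6$ is also computed correctly. There is nothing to add.
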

\begin{proof}
See~\cite[Lemma~3.1]{MR3800663}.
\end{proof}

We need some upper bounds for series involving $\ell(n)$.

\begin{lemma}\label{lem:bound-sum-ell}
We have
\begin{equation*}
\sum_{n \,>\, y} \frac1{\ell(n)} <
\exp\!\left(-\delta (\log y)^{1/2} (\log \log y)^{1/2} \right) ,
\end{equation*}
for all $\delta \in \big(0, 1/\!\sqrt{6}\;\!\big)$ and $y \gg_\delta 1$.
\end{lemma}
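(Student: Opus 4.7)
My plan is to bound the tail $\sum_{n>y}1/\ell(n)$ by decomposing $n$ according to its largest prime factor $P(n)$, using that $\ell(n)=\lcm(n,z(n))$ is much larger than $n$ whenever $n$ has a large prime factor $p$ whose rank of appearance $z(p)$ is large and essentially coprime to $n$.

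Fix a threshold $t=t(y,\delta)=\exp\!\bigl(c\sqrt{(\log y)(\log\log y)}\bigr)$ with $c>0$ to be optimized, and split
\[
\sum_{n>y}\frac{1}{\ell(n)}=S_{\mathrm{sm}}+S_{\mathrm{ro}},
\]
where $S_{\mathrm{sm}}$ collects the $t$-smooth $n>y$ and $S_{\mathrm{ro}}$ collects those with $P(n)>t$. For $S_{\mathrm{sm}}$, use the crude bound $1/\ell(n)\le 1/n$ and invoke the standard de Bruijn--Hildebrand estimate for $\sum_{n>y,\,P(n)\le t}1/n$: with $u=(\log y)/\log t$, this sum decays like $\exp\!\bigl(-u\log u(1+o(1))\bigr)$, and a short computation shows that for our choice of $t$ this gives $S_{\mathrm{sm}}\le \exp\!\bigl(-\tfrac{1}{2c}\sqrt{(\log y)(\log\log y)}(1+o(1))\bigr)$.

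For $S_{\mathrm{ro}}$, write each $n$ as $pm$ with $p=P(n)>t$ (so $p\neq 5$ once $t>5$ and $p\nmid m$). By Lemma~\ref{lem:basic}, $\ell(p)=p\,z(p)$, and since $p\mid n$ we have $\ell(p)\mid\ell(n)$, hence $\ell(n)\ge\lcm(pm,z(p))=p\,z(p)\,m/\gcd(m,z(p))$. Introduce a secondary cutoff $Y$ and separate primes with $z(p)>Y$, where the bound above gives rapid convergence after summing over $m$, from the exceptional primes with $z(p)\le Y$. The latter are sparse: every such prime divides some $F_k$ with $k\le Y$, and from $F_k\le 2^k$ one has $\omega(F_1F_2\cdots F_Y)=O(Y^2)$, which bounds the number and hence the reciprocal sum of such primes above~$t$.

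Finally, balance the two regimes in $S_{\mathrm{ro}}$ against $S_{\mathrm{sm}}$ by jointly optimizing the parameters $c$ and $Y$, so that the exponential decay rate $\delta\sqrt{(\log y)(\log\log y)}$ emerges for any $\delta<1/\sqrt6$. The main obstacle is Step~3: the trivial bound $z(p)\ge 1$ is useless for the exceptional primes, and one must combine $\ell(n)\ge n z(p)/\gcd(n,z(p))$ with the sparsity estimate $\omega(F_k)=O(k)$ to control their joint contribution. The precise constant $1/\sqrt6$ falls out of this multi-parameter optimization and is the delicate part of the argument.
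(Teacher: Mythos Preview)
The paper does not actually prove this lemma: its entire proof is the one-line citation ``See~[Proposition~1.4]{MR3983305}.'' So there is no argument in the paper to compare against; you are supplying what the authors outsourced.

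Your smooth/rough decomposition with threshold $t=\exp\bigl(c\sqrt{(\log y)(\log\log y)}\bigr)$ is the standard opening move for results of this shape, and the estimate you quote for $S_{\mathrm{sm}}$ via $\rho(u)\approx u^{-u}$ is correct. However, the rough part as written has a real gap. From $\ell(n)\ge p\,m\,z(p)/\gcd(m,z(p))$ you claim ``rapid convergence after summing over $m$'' when $z(p)>Y$, but the sum $\sum_m \gcd(m,z(p))/m$ (even restricted to $p$-smooth $m$, which is what the condition $P(n)=p$ forces) contributes a factor of order $\tau(z(p))\log p$, and after dividing by $p\,z(p)$ you are left with something like $\sum_{p>t}(\log p)/p$ times $Y^{-1+o(1)}$, which diverges. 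You have not used the constraint $n>y$ in this branch at all, and without it the bound cannot close. One needs either to feed $m>y/p$ back into the $m$-sum, or to organise the rough part differently (for instance by controlling $\#\{n\le x:\ell(n)\le T\}$ and using partial summation), which is closer to how the cited reference proceeds.

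Two smaller points: writing $n=pm$ with $p\nmid m$ ignores the case $p^2\mid n$, which must be handled separately (it is easy, but it is not automatic); and you explicitly defer the optimisation producing $1/\sqrt{6}$, so at best this is a plan rather than a proof. The plan is headed in the right direction, but Step~3 as stated does not work and needs to be redone before the balancing in Step~4 can even begin.
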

\begin{proof}
See~\cite[Proposition~1.4]{MR3983305}.
\end{proof}

\begin{lemma}\label{lem:bound-phi-ell}
We have
\begin{equation*}
\sum_{n \,>\, y}\frac{1}{\varphi\big(\ell(n)\big)} 
\ll \frac{\log \log y}{\exp\!\left(\delta (\log y)^{1/2} (\log \log y)^{1/2} \right)} ,
\end{equation*} 
for all $\delta \in \big(0, 1/\!\sqrt{6}\;\!\big)$ and $y \gg_\delta 1$.
\end{lemma}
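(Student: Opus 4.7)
The plan is to reduce to Lemma~\ref{lem:bound-sum-ell} via the classical Mertens-type bound $m/\varphi(m) \ll \log\log m$ (for $m\ge 3$), and then apply partial summation to exploit the rapid decay of $S(t) := \sum_{n > t} 1/\ell(n)$.

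First, I would obtain the pointwise estimate
\begin{equation*}
\frac{1}{\varphi(\ell(n))} = \frac{1}{\ell(n)}\cdot\frac{\ell(n)}{\varphi(\ell(n))} \;\ll\; \frac{\log\log n}{\ell(n)},
\end{equation*}
valid for $n$ sufficiently large. Here the Mertens bound gives $\ell(n)/\varphi(\ell(n)) \ll \log\log\ell(n)$, and $\log\log\ell(n) = \log\log n + O(1)$ thanks to Lemma~\ref{lem:basic}(viii), which yields $\ell(n) \le 2n^2$. This reduces the claim to the estimate
\begin{equation*}
\sum_{n > y} \frac{\log\log n}{\ell(n)} \;\ll\; (\log\log y)\,E(y), \qquad E(t) := \exp\!\bigl(-\delta(\log t)^{1/2}(\log\log t)^{1/2}\bigr).
\end{equation*}

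Next, I would perform partial summation. Writing $\log\log n = \log\log y + \int_y^n du/(u\log u)$ and exchanging sum and integral (Fubini, all terms nonnegative) yields
\begin{equation*}
\sum_{n > y} \frac{\log\log n}{\ell(n)} \;=\; (\log\log y)\,S(y) \;+\; \int_y^\infty \frac{S(t)}{t\log t}\,dt.
\end{equation*}
By Lemma~\ref{lem:bound-sum-ell}, the first term is already bounded by $(\log\log y)\,E(y)$, which matches the target. For the integral, I would split the range into the dyadic blocks $[y^{2^k},\,y^{2^{k+1}}]$ for $k\ge 0$. On each block $\int dt/(t\log t) = \log 2$, while $S(t) \le E(y^{2^k})$. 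Using $(2^k\log y)^{1/2} = 2^{k/2}(\log y)^{1/2}$ and $\log(2^k\log y)\ge \log\log y$, one checks the key inequality $E(y^{2^k}) \le E(y)^{2^{k/2}}$. Since $E(y)<1/2$ for $y$ large, the series $\sum_{k\ge 0} E(y)^{2^{k/2}}$ is doubly-exponentially decreasing and sums to $O(E(y))$, which is absorbed into the main term.

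The main technical point is showing that the integral term does not dominate: the rapid (faster than any power of $\log$) decay of $S(t)$ must beat the divergent weight $\int dt/(t\log t)$. The dyadic argument above handles this cleanly and avoids any Laplace-type asymptotic analysis. Everything else is bookkeeping, so the bound of order $(\log\log y)\,E(y)$ falls out immediately.
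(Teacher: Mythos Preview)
Your proposal is correct and follows essentially the same route as the paper: the pointwise reduction $1/\varphi(\ell(n)) \ll (\log\log n)/\ell(n)$ via the Mertens bound and $\ell(n)\le 2n^2$, followed by partial summation to write $\sum_{n>y}(\log\log n)/\ell(n) = (\log\log y)S(y) + \int_y^\infty S(t)/(t\log t)\,dt$. The only difference is in how the integral is dispatched: the paper observes directly that $f(t)/(t\log t) \ll_\delta -f'(t)$ (since $-f'(t) \asymp_\delta f(t)(\log\log t)^{1/2}/(t(\log t)^{1/2})$, which dominates $f(t)/(t\log t)$), so the integral is $\ll f(y)$ in one line; your dyadic splitting into $[y^{2^k},y^{2^{k+1}}]$ achieves the same bound by a slightly more hands-on geometric-series argument.
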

\begin{proof}
From Lemma~\ref{lem:bound-sum-ell} it follows that
\begin{equation*}
S(t) := \sum_{n \,\geq\, t} \frac1{\ell(n)}
< f(t) := \exp\!\left(-\delta (\log t)^{1/2} (\log \log t)^{1/2} \right) ,
\end{equation*}
for all $t \gg_\delta 1$.
By partial summation, we obtain that
\begin{align*}\label{equi}
\sum_{n \,\geq\, y}\frac{\log{\log{n}}}{\ell(n)} &= S(y)\log \log y +\int_{y}^{+\infty} \frac{S(t)}{t \log t}\,\mathrm{d} t \\
&< f(y)\log\log y + \int_y^{+\infty} \frac{f(t)}{t \log t}\,\mathrm{d} t \\
&\ll_\delta f(y)\log\log y - \int_y^{+\infty} f^\prime(t)\,\mathrm{d} t \\
&\ll f(y)\log\log y .
\end{align*}
Then, since $\varphi(n)\gg n/\!\log{\log{n}}$ (see, e.g., \cite[Chapter I.5, Theorem 4]{MR3363366}) and $\ell(n)\leq 2n^2$ (Lemma \ref{lem:basic}\ref{ite:basic:upper}) for all positive integers $n$, we have that 
\begin{equation*}
\sum_{n \,>\, y}\frac{1}{\varphi\big(\ell(n)\big)} \ll \sum_{n \,>\,
y}\frac{\log{\log{n}}}{\ell(n)} \ll f(y) \log \log y .
\end{equation*}
The claim follows.
\end{proof}

For every $x > 0$ and for all integers $a$ and $b$, let $\pi(x;b,a)$ be the number of primes $p \leq x$ such that $p \equiv a \pmod b$, and put also
\begin{equation*}
\Delta(x; b, a) := \pi(x;b,a) - \frac{\pi(x)}{\varphi(b)} .
\end{equation*}
We need the following bounds for $\Delta(x;b,a)$.
\begin{theorem}[Siegel--Walfisz]\label{thm:Siegel-Walfisz}
For every $A > 0$, we have,
\begin{equation*}
\Delta(x; b, a) \ll \frac{x}{(\log x)^{A}} ,
\end{equation*}
for all $x \gg_A 1$ and for all relatively prime positive integers $a,b$ with $b \leq (\log x)^A$.
\end{theorem}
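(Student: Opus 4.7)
The Siegel--Walfisz theorem is a classical result, so the most honest ``plan'' is to cite a standard reference (e.g.\ Davenport's \emph{Multiplicative Number Theory}, or Chapter~II.8 of Tenenbaum). Nevertheless, let me sketch how the proof goes, because the ingredients (character sums, the explicit formula, a zero-free region, and Siegel's bound on exceptional zeros) are what drive the statement.

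First I would reduce from $\pi(x;b,a)$ to its weighted analogue $\psi(x;b,a):=\sum_{n\le x,\,n\equiv a(b)}\Lambda(n)$ by partial summation; the loss in this step is only $O(\sqrt{x})$, which is negligible compared with any $x/(\log x)^A$. By orthogonality of Dirichlet characters modulo $b$,
\begin{equation*}
\psi(x;b,a)=\frac{1}{\varphi(b)}\sum_{\chi\!\!\!\pmod b}\bar\chi(a)\,\psi(x,\chi),\qquad \psi(x,\chi):=\sum_{n\le x}\chi(n)\Lambda(n).
\end{equation*}
The principal character contributes the main term $x/\varphi(b)+O(\log^{2}(bx))$, so everything reduces to bounding $\psi(x,\chi)$ for non-principal $\chi$ modulo $b$.

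For each non-principal $\chi$ (replacing $\chi$ by its primitive inducing character at negligible cost), the explicit formula gives
\begin{equation*}
\psi(x,\chi)=-\sum_{\rho}\frac{x^{\rho}}{\rho}+O(\log^{2}(bx)),
\end{equation*}
where $\rho$ ranges over the non-trivial zeros of $L(s,\chi)$. I would then invoke the classical de~la~Vall\'ee~Poussin zero-free region $\sigma>1-c/\log(b(|t|+2))$, which after standard bookkeeping yields $\psi(x,\chi)\ll x\exp\!\big(-c\sqrt{\log x}\big)$ uniformly for $b\le (\log x)^{A}$, \emph{provided} $L(s,\chi)$ has no exceptional real zero near $s=1$.

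The only issue is a possible Siegel zero $\beta$ of some real character $\chi$ modulo $b$. Here I would appeal to Siegel's theorem: for every $\varepsilon>0$ there is an (ineffective) constant $c(\varepsilon)>0$ with $\beta<1-c(\varepsilon)/b^{\varepsilon}$. Choosing $\varepsilon$ small in terms of $A$, the contribution of the putative Siegel zero is absorbed into the $O(x/(\log x)^{A})$ error. Summing the resulting estimates on $\psi(x,\chi)$ over the $\varphi(b)\le (\log x)^{A}$ characters and pushing back through partial summation gives the claim. The main obstacle, and the source of the well-known ineffectivity of the implied constant, is precisely this use of Siegel's theorem to handle the exceptional zero.
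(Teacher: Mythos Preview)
Your proposal is correct: the Siegel--Walfisz theorem is a classical result, and the paper likewise does not prove it but simply cites \cite[Corollary~5.29]{MR2061214} (Iwaniec--Kowalski). Your sketch via characters, the explicit formula, the de~la~Vall\'ee~Poussin zero-free region, and Siegel's bound on exceptional zeros is the standard route and goes beyond what the paper provides; for the purposes of this paper, a bare citation suffices.
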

\begin{proof}
See~\cite[Corollary 5.29]{MR2061214}.
\end{proof}

\begin{lemma}\label{lem:corollary-brun-titchmarsh}
Let $\varepsilon > 0$.
Then we have that
\begin{equation*}
\Delta(x; b, a) \ll_\varepsilon \frac{x}{\varphi(b) \log x} ,
\end{equation*}
for all $x \geq 2$ and for all relatively prime positive integers $a,b$ with $b \leq x^{1-\varepsilon}$.
\end{lemma}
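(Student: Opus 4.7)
The plan is to deduce Lemma \ref{lem:corollary-brun-titchmarsh} directly from the Brun--Titchmarsh inequality together with the Prime Number Theorem; this is precisely why the lemma carries that name, and it is the classical way to pass from an upper bound on $\pi(x;b,a)$ alone to a two-sided bound on the discrepancy $\Delta(x;b,a)$.

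First, I would invoke the Brun--Titchmarsh inequality in its standard form: for every $b < x$ and every integer $a$ with $\gcd(a,b) = 1$,
\begin{equation*}
\pi(x; b, a) \leq \frac{2x}{\varphi(b) \log(x/b)}.
\end{equation*}
Under the hypothesis $b \leq x^{1-\varepsilon}$ one has $\log(x/b) \geq \varepsilon \log x$, so this immediately yields
\begin{equation*}
\pi(x; b, a) \leq \frac{2x}{\varepsilon\, \varphi(b) \log x} \ll_\varepsilon \frac{x}{\varphi(b) \log x}.
\end{equation*}

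Second, the Prime Number Theorem (or already Chebyshev's elementary upper bound) gives $\pi(x) \ll x / \log x$, so
\begin{equation*}
\frac{\pi(x)}{\varphi(b)} \ll \frac{x}{\varphi(b) \log x}.
\end{equation*}
The triangle inequality
\begin{equation*}
|\Delta(x; b, a)| \;\leq\; \pi(x; b, a) + \frac{\pi(x)}{\varphi(b)}
\end{equation*}
then combines the two estimates to yield the stated bound.

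There is essentially no serious obstacle: the entire content is a textbook consequence of Brun--Titchmarsh. The only minor point requiring attention is the claim being valid for \emph{all} $x \geq 2$. For $x$ near $2$, the constraint $b \leq x^{1-\varepsilon}$ forces $b = 1$, in which case $\Delta(x;1,a) = 0$ and the bound is trivial; beyond a bounded range one may invoke Brun--Titchmarsh as above, and the implied constant absorbs the boundary behaviour. Thus the only real ``work'' is citing the correct form of Brun--Titchmarsh and checking that $\log(x/b)$ and $\log x$ are comparable on the admissible range of $b$, both of which are immediate.
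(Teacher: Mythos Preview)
Your argument is correct and follows essentially the same route as the paper: invoke Brun--Titchmarsh to bound $\pi(x;b,a)$, use $b \leq x^{1-\varepsilon}$ to compare $\log(x/b)$ with $\log x$, bound $\pi(x)/\varphi(b)$ via the Chebyshev (or PNT) estimate, and combine the two by the triangle inequality. The paper's proof is line-for-line the same, except it omits your remark on the boundary case near $x=2$.
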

\begin{proof}
From the Brun--Titchmarsh theorem~\cite[Theorem~9]{MR3363366} we know that
\begin{equation*}
\pi(x;b,a) \ll \frac{x}{\varphi(b) \log (x/b)} ,
\end{equation*}
for all $b < x$.
Hence, the condition $b \leq x^{1-\varepsilon}$ and the upper bound $\pi(x) \ll x / \log x$ yield that
\begin{equation*}
\Delta(x; b, a) \ll \pi(x;b,a) + \frac{\pi(x)}{\varphi(b)} \ll \frac{x}{\varphi(b)\log (x / b)} + \frac{x}{\varphi(b) \log x} \ll_\varepsilon \frac{x}{\varphi(b) \log x} ,
\end{equation*}
as desired.
\end{proof}

\section{Proof of Theorem~\ref{thm:existence}}

Let $k$ be a positive integer.
If $\mathcal{P}_k = \varnothing$ then, obviously, the relative density of $\mathcal{P}_k$ exists and is equal to zero.
Hence, suppose that $\mathcal{P}_k \neq \varnothing$.
In~particular, $\mathcal{A}_k \neq \varnothing$, since $p - 1 \in \mathcal{A}_k$ for every $p \in \mathcal{P}_k$.
Therefore, by Lemma~\ref{lem:nonempty-Ak}, we have that $\gcd\!\big(\ell(k), F_{\ell(k)}\big) = k$.
Recall the definition of $\mathcal{Q}(a_0, a_1, \dots)$ given before Corollary~\ref{cor:primes-residue-classes}.
Define the sequence $\mathcal{M}_k = m_0, m_1, \dots$ where $m_0 < m_1 < \dots$ are all the elements of
\begin{equation*}
\big\{\ell(k)\big\} \cup \big\{p\,\ell(k) : p \mid k \big\} \cup \big\{\ell(pk) : p \nmid k \big\} .
\end{equation*}
Then, from Lemma~\ref{lem:Ak-characterization} and the definition of $\mathcal{Q}(\mathcal{M}_k)$, it follows that $\mathcal{P}_k = \mathcal{Q}(\mathcal{M}_k)$.
Furthermore, by Lemma~\ref{lem:bound-phi-ell}, we have that
\begin{equation*}
\sum_{i \,\geq\, 0} \frac1{\varphi(m_i)} \ll_k \sum_{p} \frac1{\varphi\big(\ell(pk)\big)} \ll_k \sum_{p} \frac1{\varphi\big(\ell(p)\big)} < +\infty .
\end{equation*}
Hence, thanks to Corollary~\ref{cor:primes-residue-classes}, we get that the relative density of $\mathcal{P}_k$ exists and, in particular, $\bm{r}(\mathcal{P}_k) = 0$ if and only if $\mathcal{P}_k \subseteq \{2\}$ if and only if there exists an integer $i \geq 1$ such that $m_i = m_0$, or $m_i = 2m_0$ and $m_0$ is odd.
The first case is impossible, since the sequence $\mathcal{M}_k$ is increasing.
The second case is equivalent to $2 \nmid \ell(k)$ and either $p\, \ell(k) = 2\,\ell(k)$, for some prime number $p$ with $p \mid k$, or $\ell(pk) = 2\,\ell(k)$, for some prime number $p$ with $p \nmid k$.
In turn, since $k \mid \ell(k)$, this is equivalent to $2 \nmid \ell(k)$ and $\ell(pk) = 2\,\ell(k)$ for some prime number $p$ with $p \nmid k$.
The proof is complete.

\begin{remark}
We remark that the convergence of the series
\begin{equation*}\sum_{p} \frac1{\varphi\big(\ell(p)\big)}\end{equation*} 
admits a simpler proof than invoking Lemma~\ref{lem:bound-phi-ell} which we highlight below. \end{remark}
\begin{proof}
Note that $\ell(p) \gg p\,z(p) \gg p \log p$ due to Lemma~\ref{lem:basic}\ref{ite:basic:ellp}.
Thus, we have that 
\begin{equation*}\sum_{p} \frac1{\varphi\big(\ell(p)\big)}\ll \sum_{p}\frac{\log\log p}{p\,z(p)}\ll \sum_{p}\frac{\log \log p}{p\log p} < +\infty \end{equation*} since $\varphi(n)\gg n/\!\log{\log{n}}$ (see, e.g., \cite[Chapter I.5, Theorem 4]{MR3363366}) and $\ell(n)\leq 2n^2$ (Lemma \ref{lem:basic}\ref{ite:basic:upper}) for all positive integers $n$, and the convergence of last sum is standard .
\end{proof}

\section{Proof of Theorem~\ref{thm:series}}

For each positive integer $k$, let $\mathcal{R}_k$ be the set of prime numbers $p$ such that:

\begin{enumerate}
\item $k \mid \gcd(p-1, F_{p-1})$;
\item if $q \mid \gcd(p-1, F_{p-1})$ for some prime number $q$, then $q \mid k$.
\end{enumerate}
The essential part of the proof of Theorem~\ref{thm:series} is the following formula for the relative density of $\mathcal{R}_k$.

\begin{lemma}\label{lem:Qkdens}
For all positive integers $k$, the relative density of $\mathcal{R}_k$ exists and
\begin{equation}\label{equ:densQk}
\mathbf{r}(\mathcal{R}_k) = \sum_{(d,\,k) \,=\, 1} \frac{\mu(d)}{\varphi\big(\ell(dk)\big)} ,
\end{equation}
where the series is absolutely convergent.
\end{lemma}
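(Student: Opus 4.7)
The plan is to express $\mathcal{R}_k$ through divisibility conditions on $p-1$, apply a finite M\"obius inversion to a truncated version of those conditions, and then pass to the limit. By Lemma~\ref{lem:basic}\ref{ite:basic:gcd}, a prime $p$ lies in $\mathcal{R}_k$ precisely when $\ell(k) \mid p-1$ and $\ell(qk) \nmid p-1$ for every prime $q \nmid k$. Enumerating the primes not dividing $k$ as $q_1 < q_2 < \cdots$, this identifies $\mathcal{R}_k$ with the set $\mathcal{Q}(\ell(k), \ell(q_1 k), \ell(q_2 k), \dots)$ of Corollary~\ref{cor:primes-residue-classes}; the divisibility hypothesis $\ell(k) \mid \ell(q_i k)$ of that corollary is immediate from Lemma~\ref{lem:basic}\ref{ite:basic:lcm}.

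Absolute convergence comes first. When $(d,k) = 1$, Lemma~\ref{lem:basic}\ref{ite:basic:lcm} gives $\ell(dk) = \lcm(\ell(d), \ell(k))$, so $\ell(d) \mid \ell(dk)$ and hence $\varphi(\ell(d)) \leq \varphi(\ell(dk))$. Therefore
\begin{equation*}
\sum_{\substack{d \,\geq\, 1 \\ (d,k) \,=\, 1}} \frac{1}{\varphi(\ell(dk))} \leq \sum_{d \,\geq\, 1} \frac{1}{\varphi(\ell(d))} < +\infty,
\end{equation*}
the last inequality by Lemma~\ref{lem:bound-phi-ell}. Restricting the bound to prime values of $d$ also yields $\sum_{q \,\nmid\, k} 1/\varphi(\ell(qk)) < +\infty$, the remaining hypothesis of Corollary~\ref{cor:primes-residue-classes}.

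The heart of the proof is a finite M\"obius identity. Define $\mathcal{Q}_K := \{p : \ell(k) \mid p-1 \text{ and } \ell(q_i k) \nmid p-1 \text{ for every } 1 \leq i \leq K\}$. Expanding $\prod_{i \,=\, 1}^K\bigl(1 - \mathbf{1}_{\ell(q_i k) \,\mid\, p-1}\bigr)$, using $\lcm_{i \in T} \ell(q_i k) = \ell(d_T k)$ for $d_T := \prod_{i \in T} q_i$ (Lemma~\ref{lem:basic}\ref{ite:basic:lcm}), and absorbing the factor $\mathbf{1}_{\ell(k) \,\mid\, p-1}$ since $\ell(k) \mid \ell(d_T k)$ for every $T$, I obtain
\begin{equation*}
\mathbf{1}_{p \,\in\, \mathcal{Q}_K} = \sum_{T \,\subseteq\, \{1, \dots, K\}} (-1)^{|T|}\, \mathbf{1}_{\ell(d_T k) \,\mid\, p-1}.
\end{equation*}
Summing over primes $p \leq x$, dividing by $\pi(x)$, and sending $x \to +\infty$ (the sum is finite with $2^K$ terms, so Dirichlet's theorem handles each summand) produces
\begin{equation*}
\mathbf{r}(\mathcal{Q}_K) = \sum_{\substack{d \text{ sqfree},\ (d,k) \,=\, 1 \\ \text{prime factors of } d \,\in\, \{q_1, \dots, q_K\}}} \frac{\mu(d)}{\varphi(\ell(dk))}.
\end{equation*}

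To conclude, Corollary~\ref{cor:primes-residue-classes} asserts that $\mathbf{r}(\mathcal{R}_k)$ exists, and its proof in fact shows $\mathbf{r}(\mathcal{R}_k) = \lim_{K \to +\infty} \mathbf{r}(\mathcal{Q}_K)$ (the error $\#((\mathcal{Q}_K \setminus \mathcal{R}_k) \cap [1,x])/\pi(x)$ is controlled uniformly by Brun--Titchmarsh and the tail $\sum_{j \,>\, K} 1/\varphi(\ell(q_j k)) \to 0$). Passing to the limit in the finite formula, the right-hand side converges to $\sum_{(d,k) \,=\, 1} \mu(d)/\varphi(\ell(dk))$ by the absolute convergence established above, which matches $\mathbf{r}(\mathcal{R}_k)$ and gives~\eqref{equ:densQk}. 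The only delicate point is the interchange of limit and summation on the right-hand side of the finite formula, which rests squarely on that absolute convergence; everything else is routine book-keeping.
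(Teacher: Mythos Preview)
Your proof is correct, but it takes a genuinely different route from the paper. The paper works directly with the counting function: it expands $\#(\mathcal{R}_k\cap[1,x])$ via M\"obius over the squarefree $d\mid p-1$ coprime to $k$, obtains the identity $\#(\mathcal{R}_k\cap[1,x])=\sum_{d\leq x,\,(d,k)=1}\mu(d)\,\pi(x;\ell(dk),1)$, and then splits the range of $d$ into four pieces, handling the error terms by Siegel--Walfisz, Brun--Titchmarsh, and Lemma~\ref{lem:bound-phi-ell} separately. By contrast, you first recognise $\mathcal{R}_k$ as the set $\mathcal{Q}(\ell(k),\ell(q_1k),\ell(q_2k),\dots)$ of Corollary~\ref{cor:primes-residue-classes}, compute the density of each finite truncation $\mathcal{Q}_K$ as a finite M\"obius sum via plain Dirichlet, and then let $K\to\infty$ using the fact (established inside the proof of Corollary~\ref{cor:primes-residue-classes}) that $\bm{r}(\mathcal{Q})=\lim_K\bm{r}(\mathcal{Q}_K)$. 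Your argument is more modular and avoids reproducing the Siegel--Walfisz/Brun--Titchmarsh error analysis, at the cost of invoking what the \emph{proof} of Corollary~\ref{cor:primes-residue-classes} shows rather than its statement; the paper's argument is self-contained and yields more explicit control of the error terms. Both rely on Lemma~\ref{lem:bound-phi-ell} for the absolute convergence that makes the limit interchange legitimate.
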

\begin{proof}
For every prime $p$ and for every positive integer $d$, let us define
\begin{equation*}
\varrho(p, d) := \begin{cases} 1 & \text{ if } d \mid F_{p-1}, \\ 0 & \text{ if } d \nmid F_{p-1} .\end{cases}
\end{equation*}
Note that $\varrho$ is multiplicative in its second argument, that is,
\begin{equation*}
\varrho(p, de) = \varrho(p, d) \, \varrho(p, e)
\end{equation*}
for all primes $p$ and for all coprime positive integers $d$ and $e$.

From Lemma~\ref{lem:basic}\ref{ite:basic:gcd}, it follows easily that $p \in \mathcal{R}_k$ if and only if $p\equiv 1\pmod{\ell(k)}$ and $\varrho(p, q) = 0$ for all prime numbers $q$ dividing $p-1$ but not dividing $k$.
Therefore,
\begin{align}\label{equ:count1}
\#\big(\mathcal{R}_k \cap [1, x]\big) &= \sum_{\substack{p \,\leq\, x \\[1pt] \ell(k) \,\mid\, p-1}} \prod_{\substack{q \,\mid\, p-1 \\[1pt] q \,\nmid\, k}} \big(1 - \varrho(p, q)\big) = \sum_{\substack{p \,\leq\, x \\[1pt] \ell(k) \,\mid\, p-1}} \sum_{\substack{d \,\mid\, p-1 \\ (d,\, k) \,=\, 1}} \mu(d)\, \varrho(p, d) \nonumber \\
&= \sum_{\substack{d \,\leq\, x \\[1pt] (d,\, k) \,=\, 1}} \mu(d) \sum_{\substack{p \,\leq\, x \\[1pt] \lcm(\ell(k),\,d) \,\mid\, p-1}} \varrho(p, d) ,
\end{align}
for all $x > 0$.
Furthermore, by Lemma~\ref{lem:basic}\ref{ite:basic:zeta}, given a positive integer $d$ that is relatively prime with $k$, we have that $\varrho(p, d) = 1$ and $\lcm(d,\ell(k)) \mid p-1$ if and only if $\lcm(z(d),d, \ell(k)) \mid p-1$, which in turn is equivalent to $p-1$ being divisible by
\begin{equation*}
\lcm\!\big(\!\lcm\!\big(z(d), d\big), \ell(k)\big) = \lcm\!\big(\ell(d), \ell(k)\big) = \ell(dk) ,
\end{equation*}
where we used Lemma~\ref{lem:basic}\ref{ite:basic:lcm} and the fact that $d$ and $k$ are relatively prime.
Hence, we get that
\begin{equation}\label{equ:inner-sum}
\sum_{\substack{p \,\leq\, x \\[1pt] \lcm(\ell(k),d) \,\mid\, p-1}} \varrho(p, d) = \!\sum_{\substack{p \,\leq\, x \\[1pt] p \,\equiv\, 1 \!\!\!\!\pmod{\ell(dk)}}} 1 = \pi\big(x;\ell(dk),1\big) ,
\end{equation}
for all $x > 0$. 
Therefore, from~\eqref{equ:count1} and~\eqref{equ:inner-sum}, it follows that
\begin{equation*}
\#\big(\mathcal{R}_k \cap [1, x]\big) = \sum_{\substack{d \,\leq\, x \\[1pt] (d,\, k) \,=\, 1}} \mu(d) \, \pi\big(x;\ell(dk),1\big) ,
\end{equation*}
for all $x > 0$.
Pick any $A > 2$.
Also, set $y := x^{1/4} / \big(\!\sqrt{2} k\big)$ and $z := (\log x)^{A/2} / \big(\!\sqrt{2} k\big)$.
Then we have that
\begin{align*}
\frac{\#\big(\mathcal{R}_k \cap [1, x]\big)}{\pi(x)} = \sum_{(d,\, k) \,=\, 1} \frac{\mu(d)}{\varphi\big(\ell(dk)\big)} - E_1(x) + E_2(x) + E_3(x)+E_4(x)
\end{align*}
for all $x > 0$, where, by Lemma~\ref{lem:bound-phi-ell}, the infinite series converges absolutely, while
\begin{equation*}
E_1(x) := \sum_{\substack{d \,>\, x \\[1pt] (d,\, k) \,=\, 1}} \frac{\mu(d)}{\varphi\big(\ell(dk)\big)} ,
\end{equation*}
\begin{equation*}
E_2(x) := \frac1{\pi(x)}\sum_{\substack{d \,\leq\, z \\[1pt] (d,\, k) \,=\, 1}} \mu(d)\,\Delta\big(x;\ell(dk),1\big) ,
\end{equation*}
\begin{equation*}
E_3(x) := \frac1{\pi(x)}\sum_{\substack{z \,<\, d \,\leq\, y \\[1pt] (d,\, k) \,=\, 1}} \mu(d) \, \Delta\big(x;\ell(dk),1\big) ,
\end{equation*}
and 
\begin{equation*}
    E_4(x)=\frac1{\pi(x)}\sum_{\substack{y \,<\, d \,\leq\, x \\[1pt] (d,\, k) \,=\, 1}} \mu(d) \, \Delta\big(x;\ell(dk),1\big) ,
\end{equation*}
It remains only to prove that $E_1(x)$, $E_2(x)$, $E_3(x)$, $E_4(x)$ go to zero as $x \to +\infty$.
From Lemma~\ref{lem:bound-phi-ell} it follows that
\begin{equation*}
E_1(x) \ll \sum_{d \,>\, y} \frac1{\varphi\big(\ell(d)\big)} = o(1) ,
\end{equation*}
as $x \to +\infty$.
Note that, thanks to Lemma~\ref{lem:basic}\ref{ite:basic:upper}, if $d \leq z$ then $\ell(dk) \leq (\log x)^A$.
Hence, from Theorem~\ref{thm:Siegel-Walfisz}, we get that
\begin{equation*}
E_2(x) \ll \frac1{\pi(x)} \cdot \frac{x}{(\log x)^{A}} \cdot z \ll \frac1{(\log x)^{A/2 - 1}}= o(1) ,
\end{equation*}
as $x \to +\infty$.
Observe that due to Lemma~\ref{lem:basic}\ref{ite:basic:upper}, if $d \leq y$ then $\ell(dk) \leq x^{1/2}$.
Hence, applying Lemma~\ref{lem:corollary-brun-titchmarsh} and Lemma~\ref{lem:bound-phi-ell}, we get that
\begin{equation*}
E_3(x) \ll  \frac1{\pi(x)} \cdot \frac{x}{\log x} \cdot \sum_{d \,>\, z} \frac1{\varphi\big(\ell(dk)\big)} = o(1) ,
\end{equation*}
as $x \to +\infty$.
Finally, using the trivial bound $\pi(x;b,1)\leq x/b$ and Lemma~\ref{lem:bound-phi-ell}, we get that 
\begin{align*}
    E_4(x) &\ll \frac1{\pi(x)} \sum_{d \,>\, y} \left(\frac{x}{\ell(dk)} + \frac{\pi(x)}{\varphi\big(\ell(dk)\big)}\right) \ll \frac{x}{\pi(x)} \sum_{d \,>\, y} \frac1{\varphi\big(\ell(dk)\big)} \\
    &\ll \frac{\log x \log \log y}{\exp\!\left(\delta (\log y)^{1/2} (\log \log y)^{1/2} \right)}
    = o(1) ,
\end{align*}
as $x \to +\infty$.
The proof is complete.
\end{proof}

By the definition of $\mathcal{R}_k$ and by the inclusion-exclusion principle, it follows easily that
\begin{equation*}
\#\big(\mathcal{P}_k \cap [1, x]\big) = \sum_{d \,\mid\, k} \mu(d) \, \#\big(\mathcal{R}_{dk}(x) \cap [1,x]\big)
\end{equation*}
for all $x > 0$.
Therefore, by Lemma~\ref{lem:Qkdens}, we get that
\begin{align}\label{equ:last}
\mathbf{r}(\mathcal{P}_k) &= \sum_{d \,\mid\, k} \mu(d) \, \mathbf{r}(\mathcal{R}_{dk}) = \sum_{d \,\mid\, k} \mu(d) \sum_{(e, \,dk) \,=\, 1} \frac{\mu(e)}{\varphi\big(\ell(dek)\big)} \nonumber \\
&= \sum_{d \,\mid\, k} \sum_{(e,\, k) \,=\, 1} \frac{\mu(de)}{\varphi\big(\ell(dek)\big)} = \sum_{f \,=\, 1}^\infty \frac{\mu(f)}{\varphi\big(\ell(fk)\big)} , 
\end{align}
since every squarefree integer $f$ can be written uniquely as $f=de,$ where $d$ and $e$ are squarefree integers such that $d\mid k$ and $\gcd(e,k)=1$. The rearrangement of series in~\eqref{equ:last} is justified by the absolute convergence of the series of Lemma~\ref{lem:Qkdens}. 
The proof is complete.

\section{Proof of the lower bound in~\texorpdfstring{\eqref{equ:A-bounds}}{(1)} and Proposition~\ref{prop:K-bounds}}

We need the following lemma.

\begin{lemma}\label{lem:AcapK}
Let $k$ be a positive integer such that $10 \mid k$ and $\bm{r}(\mathcal{P}_k) > 0$, and let $p \in \mathcal{P}_k$.
Then we have that $kp \in \mathcal{K}$.
\end{lemma}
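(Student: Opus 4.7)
The plan is to verify the two criteria of Theorem~\ref{thm:existence} for the integer $kp$ in place of $k$. Since $10 \mid k \mid kp \mid \ell(kp)$, we have $2 \mid \ell(kp)$, so the obstruction ``$2 \nmid \ell(kp)$ and $\ell(q \cdot kp) = 2\ell(kp)$ for some prime $q \nmid kp$'' cannot apply. It therefore suffices to establish $\gcd\!\big(\ell(kp), F_{\ell(kp)}\big) = kp$.

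First I would pin down $\ell(kp)$. Because $5 \mid k \mid p-1$, we have $p \neq 5$, so Lemma~\ref{lem:basic}\ref{ite:basic:zetap}--\ref{ite:basic:ellp} give $\ell(p) = z(p)\,p$ with $z(p) \mid p-1$. The hypothesis $p \in \mathcal{P}_k$ combined with Lemma~\ref{lem:basic}\ref{ite:basic:gcd} yields $\ell(k) \mid p-1$. Setting $L := \lcm\!\big(\ell(k), z(p)\big)$, so that $L \mid p-1$, Lemma~\ref{lem:basic}\ref{ite:basic:lcm} together with the coprimality $\gcd(p, \ell(k)) = \gcd(p, z(p)) = 1$ gives
\begin{equation*}
\ell(kp) = \lcm(\ell(k), \ell(p)) = p\,L .
\end{equation*}
Using $\gcd(p, L) = 1$, the quantity $\gcd(pL, F_{pL})$ factors across the $p$-part and the $L$-part: since $z(p) \mid L \mid pL$ we have $p \mid F_{pL}$, and $v_p(pL) = 1$, so the $p$-contribution is exactly $p$. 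The task thus reduces to proving $\gcd(L, F_{pL}) = k$.

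The lower bound $k \mid \gcd(L, F_{pL})$ is immediate from $\ell(k) \mid L \mid pL$ via Lemma~\ref{lem:basic}\ref{ite:basic:gcd}. As a warm-up I would separately record that $\gcd(L, F_L) = k$: the upper bound $\gcd(L, F_L) \mid \gcd(p-1, F_{p-1}) = k$ follows from $L \mid p-1$ and $F_L \mid F_{p-1}$ (Lemma~\ref{lem:basic}\ref{ite:basic:fibdiv}), and the lower bound is the same as above.

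The crux, and the main obstacle I expect, is upgrading $\gcd(L, F_L) = k$ to $\gcd(L, F_{pL}) = k$. My plan is to set $d := \gcd(L, F_{pL})$, so that $d \mid L \mid p-1$ and $z(d) \mid pL$, and to show $p \nmid z(d)$; this will give $z(d) \mid L$, hence $d \mid F_L$, hence $d \mid \gcd(L, F_L) = k$. Suppose for contradiction that $p \mid z(d)$. The bound $z(n) \leq 2n$ (folklore, invoked in the proof of Lemma~\ref{lem:basic}\ref{ite:basic:upper}) yields $z(d) \leq 2(p-1) < 2p$, forcing $z(d) = p$. Then every prime $q \mid d$ divides both $F_p$ and $p-1$, so $z(q) \mid p$ gives $z(q) = p$; combined with $z(q) \leq q + 1$ (Lemma~\ref{lem:basic}\ref{ite:basic:zetap}) and $q \leq p-1$, this forces $q = p - 1$ to be prime. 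But $5 \mid p - 1$ and $p \geq 11$ make $p-1$ composite, a contradiction. Hence $d$ has no prime divisor, i.e., $d = 1$, which contradicts $z(d) = p \geq 2$. So $p \nmid z(d)$, completing the verification that $\gcd(\ell(kp), F_{\ell(kp)}) = kp$, and Theorem~\ref{thm:existence} then gives $\bm{r}(\mathcal{P}_{kp}) > 0$, i.e., $kp \in \mathcal{K}$.
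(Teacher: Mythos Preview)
Your proof is correct and reaches the same conclusion, but the route differs from the paper's in two respects. First, the paper does not compute $\ell(kp)$ explicitly; instead it exhibits the concrete witness $n = p(p-1)$, shows directly that $\gcd\!\big(p(p-1), F_{p(p-1)}\big) = kp$, and then invokes Lemma~\ref{lem:nonempty-Ak} to conclude $\gcd\!\big(\ell(kp), F_{\ell(kp)}\big) = kp$. You instead identify $\ell(kp) = pL$ with $L = \lcm\!\big(\ell(k), z(p)\big)$ and work with it directly, which is a little more structural. Second, and more substantively, for the crux---ruling out extra prime factors when passing from $F_m$ to $F_{pm}$---the paper uses Lemma~\ref{lem:basic}\ref{ite:basic:fibratio} (i.e., $\gcd(F_{pm}/F_m, F_m) \mid p$) to force any putative extra prime $q$ to equal $p$, whereas you use the size estimate $z(d) \leq 2d$ to pin down $z(d) = p$ and then argue at the level of prime divisors of $d$ that $q = p-1$, which is composite since $10 \mid p-1$. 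The paper's device is slightly slicker and avoids your tail case analysis; your approach has the virtue of bypassing Lemma~\ref{lem:basic}\ref{ite:basic:fibratio} entirely, at the cost of a longer finish.
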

\begin{proof}
Since $p \in \mathcal{P}_k$, we have that $\gcd(p - 1, F_{p - 1}) = k$.
Furthermore, since $5 \mid k$, we have that $p \equiv 1 \pmod 5$, and so $z(p)\mid p-1$ and $p \mid F_{p(p-1)}$ due to Lemma~\ref{lem:basic}\ref{ite:basic:zetap} and~\ref{ite:basic:zeta}.
In particular, $\gcd(p, F_{p(p-1)}) = p$.
For the sake of brevity, put $g := \gcd(p - 1, F_{p(p-1)})$.
We shall proved that $g = k$.
First, in light of Lemma~\ref{lem:basic}\ref{ite:basic:fibdiv}, we have that $k \mid g$.
Suppose that $q$ is a prime factor of $g / k$.
Then $q \neq p$ and $q \mid F_{p(p-1)} / F_{p-1}$.
Furthermore, by Lemma~\ref{lem:basic}\ref{ite:basic:zeta}, we have that $z(q) \mid p(p-1)$.
If $p \mid z(q)$ then, by Lemma~\ref{lem:basic}\ref{ite:basic:zetap}, $p \mid q - 1$, which is impossible since $q \leq p - 1$.
Thus $p \nmid z(q)$ and so $z(q)\mid p-1$. 
In particular, by Lemma~\ref{lem:basic}\ref{ite:basic:zeta}, we get that $q \mid F_{p-1}$.
Hence, Lemma~\ref{lem:basic}\ref{ite:basic:fibratio}, yields that $q = p$, which is impossible.
Therefore, we have that $g = k$.
Consequently, we get that 
\begin{equation*}
\gcd\!\big(p(p-1),F_{p(p-1)}\big) = \gcd(p-1,F_{p(p-1)}) \, \gcd(p,F_{p(p-1)}) = kp .
\end{equation*}
Thus $\mathcal{A}_{kp} \neq \varnothing$ and, by Lemma~\ref{lem:nonempty-Ak}, we have that $\gcd\!\big(\ell(kp),F_{\ell(kp)}\big)=kp$.
Also, since $2 \mid k$, we have that $2 \mid \ell(kp)$.
Hence, from Theorem~\ref{thm:existence} it follows that $kp \in \mathcal{K}$, as desired.
\end{proof}

Let us prove the lower bound of Proposition~\ref{prop:K-bounds}.
Note that $\ell(10)=30$ and $\gcd(\ell(10),F_{\ell(10)})=10$ so that, by Theorem~\ref{thm:existence}, we have that $\mathbf{r}(\mathcal{P}_{10})>0$.
Hence, applying Lemma~\ref{lem:AcapK} with $k = 10$, we get that
\begin{equation}\label{equ:K-below}
\#\big(\mathcal{K} \cap [1, x]\big) \gg \#\big\{kp : p \in \mathcal{P}_k \cap [1, x/k]\big\} \gg \frac{x}{\log x} , 
\end{equation}
which proves the lower bound.

If $k \in \mathcal{K}$ then, by Theorem~\ref{thm:existence}, we have that $\gcd\!\big(\ell(k),F_{\ell(k)}\big) = k$.
Hence, from~\cite[Lemma 2.2(iii)]{MR3859754}, it follows that $k$ belongs to $\mathcal{A}$.
Therefore $\mathcal{K} \subseteq \mathcal{A}$.
Consequently, on the one hand, by~\eqref{equ:K-below}, we get that
\begin{equation*}
    \#\big(\mathcal{A} \cap [1, x]\big) \geq \#\big(\mathcal{K} \cap [1, x]\big) \gg \frac{x}{\log x} ,
\end{equation*}
for all $x \geq 2$, which is the lower bound of~\eqref{equ:A-bounds}.
On the other hand, by Theorem~\ref{thm:A-bounds}, we get that
\begin{equation*}
\#\big(\mathcal{K} \cap [1, x]\big) \leq \#\big(\mathcal{A} \cap [1, x]\big) = o(x) ,
\end{equation*}
as $x \to +\infty$, which is the upper bound of Proposition~\ref{prop:K-bounds}.
The proofs are complete.

\bibliographystyle{amsplain-no-bysame}

\end{document}